\tikzset{frontline/.style={preaction={draw=white,-,line width=6pt}},}  
\definecolor{references}{rgb}{0,0,1}
\newtheorem{thm}{Theorem}[section]
\newtheorem{lemma}[thm]{Lemma}
\newtheorem{theorem}[thm]{Theorem}
\newtheorem{proposition}[thm]{Proposition}
\newtheorem{corollary}[thm]{Corollary}
\newtheorem*{prop*}{Proposition}
\newtheorem*{lemma*}{Lemma}
\theoremstyle{definition}
\newtheorem{definition}[thm]{Definition}
\newtheorem{example}[thm]{Example}
\newtheorem{remark}[thm]{Remark}
\theoremstyle{remark}
\newtheorem{question}[thm]{Question}
\newtheorem{observation}[thm]{Observation}
\numberwithin{equation}{section}
  \def\ng{{\mathfrak n}}
    \def\KC{{\mathcal{K}}}
\def\AS{{\EuScript A}}
\def\CS{{\EuScript C}}
\def\DS{{\EuScript D}}
\def\RS{{\EuScript R}}
\def\YS{{\EuScript Y}}
\def\a{\alpha}
\def\b{\beta}
\def\d{\delta}
\def\e{\varepsilon}
\let\phi=\varphi
\let\tilde=\widetilde
\def\C{{\mathbbm C}}
\def\N{{\mathbbm N}}
\def\Z{{\mathbbm Z}}
\def\1{\mathbbm{1}}
\renewcommand{\k}{\mathbbm{k}}
\newcommand{\smMatrix}[1]{\left[\begin{smallmatrix}#1\end{smallmatrix}\right]}
\newcommand{\sqmatrix}[1]{\left[\begin{matrix} #1\end{matrix}\right]}
\newcommand{\ip}[1]{\langle #1\rangle}
\newcommand{\refequal}[1]{\xy {\ar@{=}^{#1}
(-1,0)*{};(1,0)*{}};
\endxy}
\newcommand{\Hom}{\operatorname{Hom}}
\newcommand{\End}{\operatorname{End}}
\newcommand{\id}{\operatorname{id}}
\newcommand{\Id}{\operatorname{Id}}
\newcommand{\inv}{^{-1}}
\newcommand{\Ch}{\operatorname{Ch}}
\newcommand{\Cone}{\operatorname{Cone}}
\begin{document}

\newcommand{\Sq}{\operatorname{Sq}}
\newcommand{\summand}{\buildrel\oplus\over\subset}
\newcommand{\prefix}{pre}
\newcommand{\tw}{\operatorname{tw}}

\newcommand{\ti}{\mathtt{i}}
\newcommand{\tj}{\mathtt{j}}
\newcommand{\tk}{\mathtt{k}}
\newcommand{\Tw}{\operatorname{Tw}}
\newcommand{\cTw}{\operatorname{cTw}}
\newcommand{\pretr}{\operatorname{PreTr}}
\renewcommand{\e}{\epsilon}

\renewcommand{\ll}{\llbracket}
\newcommand{\rr}{\rrbracket}
\newcommand{\dgmod}{\text{-}\mathrm{dgmod}}
\renewcommand{\Id}{\operatorname{id}}

\title{Homological perturbation theory with curvature}

\begin{abstract}
We prove a general version of the homological perturbation lemma which works in the presence of curvature, and without the restriction to strong deformation retracts, building on work of Markl.  A key observation is that the notion of strong homotopy equivalence of complexes (or objects in an abstract dg category) has a natural expression in the language of curved twisted complexes.
\end{abstract}

\author{Matthew Hogancamp} \address{Northeastern University}

\maketitle

\setcounter{tocdepth}{1}
\tableofcontents

\newcommand{\she}{\RS_{s.h.e.}}
\renewcommand{\e}{\epsilon}
\newcommand{\twc}{\operatorname{cTw}}

\section{Introduction}
\label{s:intro}

\subsection{Homological perturbation theory}
\label{ss:intro curved hpt}
Much of homological algebra concerns the problem of transferring information (or structure) from a complex $(X,\d)$ to a complex $(X,\d+\a)$. For instance in the setting of abelian categories, the primary aim of spectral sequences is to relate the homologies of $(X,\d)$ and $(X,\d+\a)$ under various circumstances.

In particular, the subject of homological perturbation theory concerns the following problem: given a homotopy equivalence of complexes $(X,\d_X)\simeq (Y,\d_Y)$ and a ``perturbed'' differential $\d_X+\a$ on $X$, construct a perturbed differential $\d_Y+\b$ and a homotopy equivalence $(X,\d_X+\a)\simeq (Y,\d_Y+\b)$.  Note that this requires some hypotheses; for instance we could always take $\a=-\d_X$, and there are very few homotopy equivalences involving $(X,0)$. 

Homological perturbation theory is an essential tool in homological algebra and has a long history, going back to Eilenberg-Maclane \cite{EilMacGroupsI}.  Traditionally, homological perturbation theory is only discussed in the context of strong deformation retracts (see for instance \cite{GugLamPertI,GugLamPertII} and the references therein), but Markl \cite{MarklIdeal} has shown how this restriction can be eliminated by introducing the notion of strong homotopy equivalence.  This is a very useful observation, as every homotopy equivalence can be promoted to a strong homotopy equivalence (we give an elementary proof of this fact in Proposition \ref{prop:he iff she}).

In this paper we are concerned with the problem of homological perturbation theory for complexes with nonzero curvature, that is to say, complexes for which $\d^2\neq 0$.  We give some motivation for the appearance of curved complexes below.   Let us mention that a curved version of the homological perturbation lemma can be found in \cite{Tu-MFviaK}, but with the restriction to strong deformation retracts.  Without the crutch of strong deformation retracts, curved homological perturbation is quite a tricky matter.  Nonetheless it turns out that strong homotopy equivalence is exactly what is needed for our curved homological perturbation result (which specializes to Markl's Ideal Perturbation Lemma \cite{MarklIdeal} in the case of zero curvature, see \S \ref{s:curved hpl}).

\begin{remark}
We should also point out the unpublished notes of Jeff Hicks \cite{HicksCurvedAinfty-pp}, in which one finds a result concerning the transfer of curved $A_\infty$ algebra structures along a homotopy equivalence.  We do not see a direct relation with what is presented here, because in essence we fix a very boring curved $A_\infty$ algebra, namely $A=\k[z]$ with curvature $z$ and zero differential, and consider perturbation theory for curved dg $A$-modules (though we do not adopt this language in the sequel).
\end{remark}


\subsection{Why curved complexes?}
\label{ss:why curved}
In mathematics one often studies objects (topological spaces, schemes, etc) in families, and the same ought to be true of dg categories.  Many familiar dg categories (such as categories of complexes) do naturally come in families, and one way to construct such families involves the consideration of \emph{curved} complexes, as we now explain.

Let $\CS=\Ch^b(\AS)$ be the dg category of complexes over a $\k$-linear category $\AS$, and let $z$ be an element of the center of $\AS$ (i.e.~a natural endomorphism of the identity functor).  For instance we could take $R$ to be a $\k$-algebra, $\AS=R\text{-mod}$, and $z\in Z(R)$.  Let $y$ denote a formal indeterminate of degree 2.   Let $\YS$ denote the category whose objects are pairs $(X,\d(y))$ where $X$ is a finite sequence of objects in $\AS$ and
\[
\d(y) = \sum_{l\geq 0} \d_l y^l,\qquad\qquad \d_l\in \End^{1-2l}_\CS(X)
\]
is a formal sum of endomorphisms of $X$ with
\[
\d(y)^2=z_X y,
\]
where $z_X\in \End^0_\CS(X)$ denotes the action of $z$ on $X$ (i.e.~the componentwise action of $z$ on the chain objects $X^k\in \AS$).  From the boundedness assumption on $X$, the endomorphisms $\d_l$ are zero for all but finitely many $l\in \Z$, so $\d(y)$ is really a polynomial in $y$.  The pair $(X,\d)$ is a particular kind of \emph{curved complex} with \emph{curvature} $z_X y$.  These form a dg category, just as in the case of ordinary complexes (see \S \ref{ss:curved tw}).

\begin{remark}
Writing the equation $\d(y)^2=z_X y$ in terms of components, we see that $\d_0^2=0$, $\d_1\in \End^{-1}_\CS(X)$ is a homotopy for $z_X$, and $\d_l$ for $l\geq 2$ are certain higher homotopies.
\end{remark}

\begin{remark}
There is a dg functor $\YS\rightarrow \Ch^b(\AS)$ which sets $y=0$.   But we can also specialize $y$ to be any other element $t\in\k$ (this has the affect of collapsing the $\Z$ grading to a $\Z/2$ grading), obtaining a 1-parameter family of dg categories $\YS_t$ ($t\in \k$) (though perhaps it is better to regard $\YS$ as a sheaf of dg categories on affine space $\mathbb{A}^1_\k$).
\end{remark}

\begin{remark}
Any finite collection of central elements $z_1,\ldots,z_r$ in $\AS$ determines in a similar fashion a sheaf of dg categories on $\mathbb{A}_\k^r$, by considering the dg category of complexes with curvature $\sum_i  z_i y_i$.
\end{remark}

Such families can often shed light on the original category $\CS=\Ch^b(\AS)$ (often by means of spectral sequences from hom complexes in $\CS=\YS_{t=0}$ to hom complexes in nearby categories $\YS_{t\neq 0}$).  One successful instance of this (and our primary motivation for considering curved complexes) is the main result of \cite{GorHog-y-pp}, which relates the Hecke category in type $A$ to the isospectral Hilbert scheme of points in $\C^2$.

\begin{remark}
The notions of curved dg algebras and modules also appear naturally in symplectic topology and Fukaya categories (see \cite{AurouxBeginners}, Remark 2.12).  Additionally, matrix factorizations \cite{Eis80} can also be regarded as certain kinds of curved complexes.
\end{remark}

\subsection{Outline of the paper}
\label{ss:outline}
The remainder of the paper is written using the language of dg categories, the basics of which we recall in \S \ref{s:basics}.  Specifically, \S \ref{ss:dgcats} introduces dg categories, functors, and natural transformations.  In \S \ref{ss:suspension} we recall the notion of additive suspended envelope $\Sigma \CS$.  In \S \ref{ss:twists} we recall the notion of twists in a dg category (which is an abstraction of the relationship between complexes $(X,\d)$ and $(X,\d+\a)$).  In \S \ref{ss:cones} we recall the mapping cone construction and recall the standard result that cones detect homotopy equivalences. In \S \ref{ss:twists of contractible} we recall a standard result concerning twists of contractible objects in dg categories.

In \S \ref{ss:curved tw} we introduce the notion of curved twisted complexes over a dg category $\CS$.  In \S \ref{ss:she} we introduce the notion of strong homotopy equivalence and explain how this can be expressed quite naturally in the language of curved twisted complexes.  We also prove several very useful results, including how to promote a homotopy equivalence to a strong homotopy equivalence (Proposition \ref{prop:he iff she}).  This uses the technical but useful fact that the data of a strong homotopy equivalence $X\simeq Y$ is equivalent to the data of a strong homotopy equivalence $\Cone(X\rightarrow Y)\simeq 0$ (Lemma \ref{lemma:she iff cone sc}).

\S \ref{s:curved hpl} contains all the results on perturbation theory.  The main result (Theorem \ref{thm:curved hpl}) on curved homological perturbation theory is stated and proved in \S \ref{ss:main result}, and \S \ref{ss:consequences} develops useful corollaries of this main result (including Markl's Ideal Perturbation Lemma, stated as Corollary  \ref{cor:uncurved hpl}).

\subsection*{Acknowledgements}
We owe an intellectual debt to Martin Markl, from whose paper \cite{MarklIdeal} we first learned of strong homotopy equivalence, and whose Ideal Perturbation Lemma turned out to be the key to our main result.  We also thank Florian Naef for the long and enlightening discussions, and for tolerating the author's  (not so) recent fascination with curved complexes, and Paul Wedrich for comments on an earlier draft.

The author was supported by NSF grant DMS 1702274.

\section{DG categories}
\label{s:basics}

\subsection{First definitions}
\label{ss:dgcats}

\subsubsection{DG $\k$-modules}
\label{sss:dg kmod}
Let $\k$ be a commutative ring, and let $\CS(\k)$ be the category of complexes of $\k$-modules, with the cohomological convention for differentials:
\[
\cdots \buildrel d\over \rightarrow X^k \buildrel d\over \rightarrow  X^{k+1}\buildrel d\over \rightarrow \cdots.
\]
Objects in $\CS(\k)$ are also called dg $\k$-modules.

Morphism spaces in $\CS(\k)$ are by definition the complexes $\Hom_{\CS(\k)}(X,Y)$ which in degree $k$ are defined by
\[
\Hom_{\CS(\k)}^k(X,Y) = \prod_{i\in \Z}\Hom_\k(X^i,Y^{i+k}),
\]
with differential given by the ``\emph{super-commutator}''
\[
f\mapsto [\d,f]:= \d_Y\circ f - (-1)^{|f|} f\circ \d_X,
\]
where $\d_Y$ and $\d_X$ are the differentials on $Y$ and $X$, and $|f|\in \Z$ denotes the degree of $f$.

The category $\CS(\k)$ is monoidal with the usual tensor product of complexes
\[
(X\otimes_\k Y)^k:=\bigoplus_{i+j=k} X^i\otimes_\k Y^j,\qquad\qquad \d_{X\otimes Y} = \d_X\otimes \Id_Y + \Id_X\otimes \d_Y.
\]
The tensor product of morphisms (needed in order to make sense of the above expression of $\d_{X\otimes Y}$) is defined using the usual sign rule
\[
(f\otimes g)(x\otimes y) = (-1)^{|g||x|}f(x)\otimes g(y).
\]
This sign rule guarantees that the composition and tensor product are related by
\[
(f\otimes g)\circ (f'\otimes g') = (-1)^{|g||f'|} (f\circ f')\otimes (g\circ g')
\]
which, in turn, guarantees that $\d_{X\otimes Y}^2=0$.

In fact $\CS(\k)$ has the structure of a symmetric monoidal category, with the braiding morphisms defined by
\[
\tau_{X,Y}:X\otimes_\k Y \rightarrow Y\otimes_\k X,\qquad\qquad x\otimes y \mapsto (-1)^{|x||y|} y\otimes x.
\]
The sign here is necessary in order to guarantee naturality of $\tau_{X,Y}$ with respect to morphisms of nonzero degree:
\[
(g\otimes f)\circ \tau_{X,Y} = \tau_{X',Y'}\circ (f\otimes g)
\]
for all $f\in \Hom_\CS(X,X')$ and all $g\in \Hom_\CS(Y,Y')$.

\subsubsection{DG categories}
\label{sss:dgcats}
\begin{definition}\label{def:dgcat}
A $\k$-linear category $\CS$ is called a \emph{dg category} if each hom space $\Hom_\CS(X,Y)$ is a $\Z$-graded complex of $\k$-modules and composition of morphisms defines a chain map
\[
\Hom_\CS(Y,Z)\otimes_\k \Hom_\CS(X,Y)\rightarrow \Hom_\CS(X,Z).
\]
Equivalently, a dg category is a category enriched in $\CS(\k)$.
\end{definition}
The differential in the hom space $\Hom_\CS(X,Y)$ is denoted simply by $d$ or $d_\CS$.  Since the tensor product differential on $\Hom_\CS(Y,Z)\otimes_\k \Hom_\CS(X,Y)$ is given by $d_\CS\otimes \Id + \Id\otimes d_\CS$, the condition that composition of morphisms be a chain map is equivalent to the Leibniz rule
\[
d_\CS(f\circ g) = d_\CS(f)\circ g + (-1)^{|f|}f\circ d_\CS(g),
\]
where $|f|\in\Z$ is the degree of $f$.

\begin{example}
If $\AS$ is a $\k$-linear category, then we let $\Ch(\AS)$ denote the category of complexes over $\AS$, with hom spaces given by the complexes $\Hom_{\Ch(\AS)}(X,Y)$ with
\[
\Hom_{\Ch(\AS)}^k(X,Y) = \prod_{i\in \Z}\Hom_\AS(X^i,Y^{i+k})
\]
and differential given by the super-commutator
\[
f\mapsto [\d,f]:=\d_Y\circ f - (-1)^{\deg(f)} f\circ \d_X .
\]
It is an exercise to check that $[\d,[\d,f]]=0$, and the Leibniz rule is satisfied, so that $\Ch(\AS)$ is a dg category.
\end{example}

The following is standard terminology.
\begin{definition}\label{def:closed etc}
A morphism $f\in \Hom_\CS(X,Y)$ is \emph{closed} if $d_\CS(f)=0$ and \emph{exact} if $f=d_\CS(h)$ for some $h\in \Hom_\CS(X,Y)$.  Closed (resp.~exact) morphisms are called \emph{cycles} (resp.~\emph{boundaries}).  Exact morphisms are also called \emph{null-homotopic}.  More generally, two morphisms $f,g\in \Hom_\CS^l(X,Y)$ are \emph{homotopic} (written $f\simeq g$) if $f-g$ is exact.

We let $Z^0(\CS)$ denote the category with the same objects as $\CS$ but morphisms the degree zero closed morphisms in $\CS$.

The \emph{homotopy category} (or \emph{cohomology category}) $H^0(\CS)$ is the category with the same objects, but morphisms the degree zero cycles mod boundaries.

An \emph{isomorphism} in $\CS$ is a closed degree zero invertible morphism in $\CS$.   A closed degree zero morphism $f\in \Hom_\CS(X,Y)$ is a \emph{homotopy equivalence} if it represents an isomorphism in $H^0(\CS)$.  We say that a collection of maps
\[
\begin{tikzpicture}[baseline=0em]
\tikzstyle{every node}=[font=\small]
\node (a) at (0,0) {$X$};
\node (b) at (3,0) {$Y$};
\path[->,>=stealth,shorten >=1pt,auto,node distance=1.8cm, thick]
(a) edge [loop left, looseness = 5,in=215,out=145] node[left] {$h$}	(a)
([yshift=3pt] a.east) edge node[above] {$f$}		([yshift=3pt] b.west)
([yshift=-2pt] b.west) edge node[below] {$g$}		([yshift=-2pt] a.east)
(b) edge [loop left, looseness = 5,in=35,out=325] node[right] {$k$}	(b);
\end{tikzpicture}
\]
are the \emph{data of a homotopy equivalence} $X\simeq Y$ if $\deg(f)=\deg(g)=0$, $\deg(h)=\deg(k)=-1$, and
\[
d_\CS(f) = d_\CS(g) =0,\qquad\qquad d_\CS(h) = \Id_X - g\circ f,\qquad \qquad d_\CS(k) = \Id_Y-f\circ g.
\]
An object $X\in \CS$ is \emph{contractible} if $X\simeq 0$ or, equivalently, $\Id_X$ is an exact morphism.  In this case any $h\in \End^{-1}_\CS(X)$ with $d_\CS(h) = \Id_X$ is called a \emph{null-homotopy} or \emph{contracting homotopy} for $X$.
\end{definition}

\begin{remark}
We emphasize that there may be many invertible morphisms in $\CS$, but the word \emph{isomorphism} is reserved for the closed degree zero invertible morphisms.  We use $\cong$ to denote isomorphism in $\CS$ (equivalently, isomorphism in $Z^0(\CS)$) and $\simeq$ to denote isomorphism in $H^0(\CS)$.
\end{remark}
\begin{remark}
An ordinary $\k$-linear category $\AS$ can be regarded as a dg category with $Z^0(\AS)=\AS$ (hence also $H^0(\AS)=\AS$).
\end{remark}

\begin{remark}[Consequences of the Leibniz rule]\label{rmk:consequences}
Note that identity morphisms are automatically closed by applying the Leibniz rule to $d_{\CS}(\Id_X) = \d_{\CS}(\Id_X\circ \Id_X)$.

If $f$ is an invertible (not necessarily degree zero) morphism $\CS$ then applying the Leibniz rule to $f\circ f\inv$ yields
\begin{equation}\label{eq:d of inverse}
d_\CS (f\inv) \ = \ (-1)^{|f|+1} f\inv\circ d_\CS(f)\circ f\inv.
\end{equation}
In particular the inverse of a closed morphism is automatically closed.
\end{remark}

\begin{example} If $\AS$ is a $\k$-linear category the degree zero cycles and boundaries in the dg category of complexes $\Ch(\AS)$ are exactly the degree zero chain maps and null-homotopic chain maps, respectively. Thus the cohomology category $H^0(\Ch(\AS))=:\KC(\AS)$ is the usual homotopy category of complexes.
\end{example}

\begin{remark}\label{rmk:othergradings}
One can consider dg categories which are graded by abelian groups other than $\Z$.  If $\Gamma$ is an abelian group then in order to develop the theory of differential $\Gamma$-graded categories we need to endow $\Gamma$ with two additional pieces of data:
\begin{enumerate}
\item a symmetric bilinear form $\ip{\ ,\ }:\Gamma\rightarrow \Z/2$.
\item a distinguished element $\iota\in \Gamma$ with $\ip{\iota,\iota}=1$.
\end{enumerate}
The distinguished element $\iota$ is the degree of all differentials, and the form $\ip{\ ,\ }$  is used in the formulation of the Koszul sign rule for ``commuting things past one another''.  More precisely, it is used in defining the symmetric monoidal structure in the category of $\Gamma$-graded $\k$-modules.  This is necessary to define the notions of natural transformation and opposite category, among other things.  Moreover, the group homomorphism $p:\Gamma\rightarrow \Z/2$ sending $j\mapsto \ip{j,\iota}$ enters in the definition of the Leibniz rule.

For simplicity we often stick to the case of $\Z$- or $\Z/2$-graded dg categories with $\ip{i,j}:=ij$ (mod 2) and distinguished element $\iota=1$.

\end{remark}

\subsubsection{DG functors and natural transformations}
\label{sss:functors and natural trans}
If $\CS$ and $\DS$ are dg categories, a dg functor $F:\CS\rightarrow \DS$ is a functor whose action on hom spaces is a degree zero chain map
\[
\Hom_\CS(X,Y)\rightarrow \Hom_{\DS}(F(X),F(Y)),\qquad\qquad f\mapsto F(f).
\]
In other words, $d_\DS(F(f)) = F(d_\CS(f))$.  Given dg functors $F,G:\CS\rightarrow \DS$, a natural transformation $\phi:F\rightarrow G$ of degree $k$ is a collection of morphisms $\phi_X\in \Hom_\DS^k(F(X),G(X))$ such that all of the relevant squares involving morphisms $F(f)$, $G(f)$ super-commute.  That is to say,
\[
\phi_Y\circ F(f) = (-1)^{k|f|} G(f)\circ \phi_X
\]
for all morphisms $f\in \Hom_\CS(X,Y)$.  The differential $d(\phi)$ of such a natural transformation is defined by $d(\phi)_X = d(\phi_X)$.  To check naturality of $d(\phi)$ is an easy exercise (hint: take differential of the equation $0 = \phi_Y\circ F(f) - (-1)^{|\phi||f|}G(f)\circ \phi_X$).

\subsection{Direct sum and suspension}
\label{ss:suspension}


\begin{definition}[Additive and suspended envelope]\label{def:add sus hull}
If $\CS$ is a dg category, let $\Sigma \CS$ denote the dg category whose objects are formal expressions $\bigoplus_{i\in I} X_i[b_i]$ in which $I$ is a finite set, $X_i\in \CS$, and $b_i\in \Z$.   The morphism complexes in $\Sigma \CS$ are defined by
\[
\Hom_{\Sigma \CS}^l\Big(\bigoplus_{i\in I} X_i[a_i],\bigoplus_{i\in J} Y_j[b_j]\Big) \ := \ \bigoplus_{(j,i)\in J\times I} \Hom_{\CS}^{l+b_j - a_i}(X_i,Y_j),
\]
with differential
\[
d_{\Sigma \CS}((f_{ji})) = ((-1)^{b_j} d_{\CS}(f_{ji}))
\]
and composition
\[
(f_{kj})\circ (g_{ji}) := \left(\sum_j f_{kj}\circ g_{ji}\right).
\]
\end{definition}

If $I=\{1,\ldots,r\}$ and $J=\{1,\ldots,s\}$ then the components $f_{ji}\in  \Hom_\CS(X_i,Y_j) $ of such a morphism can be arranged in to an $s\times r$ matrix
\[
\sqmatrix{
f_{11} & \cdots & f_{1r} \\
\vdots & \ddots & \vdots \\
f_{s1} & \cdots & f_{sr}
}
\ : \ \bigoplus_{i=1}^r X_i[a_i]\rightarrow \bigoplus_{j=1}^s Y_j[b_j].
\]
Then composition is just matrix multiplication together with composition in $\CS$, and the differential just takes the differential component-wise up to a sign (depending only on the row):
\[
d_{\Sigma\CS}\left(\sqmatrix{
f_{11} & \cdots & f_{1r} \\
\vdots & \ddots & \vdots \\
f_{s1} & \cdots & f_{sr}
}\right)
\ \ = \ \
\sqmatrix{
(-1)^{b_1} d_{\CS}(f_{11}) & \cdots & (-1)^{b_1}d_{\CS}(f_{1r}) \\
\vdots & \ddots & \vdots \\
(-1)^{b_s}d_{\CS}(f_{s1}) & \cdots & (-1)^{b_s}d_{\CS}(f_{sr})
} 
\]

\begin{remark}
Given objects $X,X'\in \CS$ and $k\in \Z$ we write $X'\cong X[k]$ if $X'$ is equipped with a closed invertible morphism $\theta\in \Hom_\CS^k(X',X)$.  To understand and remember the signs in the above definition, suppose $X'\cong X[k]$ and $Y'\cong Y[l]$.  Then we have an identification of hom spaces
\[
\Hom_\CS(X',Y')\rightarrow \Hom_\CS(X,Y),\qquad\qquad f\mapsto \theta\circ f\circ \phi\inv
\]
where $\phi\in \Hom_\CS(X',X)$ and $\theta\in \Hom_\CS(Y',Y)$ are chosen closed invertible maps of degrees $k$ and $l$, respectively.  This identification adjusts gradings according to
\[
\deg(\theta\circ f\circ \phi\inv) = \deg(f) + l-k
\]
and also respects differentials only up to a sign:
\[
d_\CS(\theta\circ f\circ \phi\inv) = (-1)^l \theta \circ d_\CS(f)\circ \phi\inv
\]
by the Leibniz rule (since $\theta$ and $\phi\inv$ are closed).
\end{remark}
\begin{remark}
Whether or not $X[k]$ exists as an object of $\CS$ is a property, not a structure, since any two objects $Z_1,Z_2\in \CS$ equipped with closed invertible degree $k$ morphisms $\theta_i:Z_i\rightarrow X$ are canonically isomorphic via $\theta_2\inv\circ \theta_1$.  We say that $\CS$ is \emph{suspended} or \emph{closed under the suspension} if $X[k]$ exists as an object of $\CS$ for all $X\in \CS$ and all $k\in \Z$ (equivalently $X[\pm 1]$ exist in $\CS$ for all $X\in \CS$).  In this case the canonical dg functor $\CS\rightarrow \Sigma\CS$ is essentially surjective as well as fully faithful, hence is an equivalence of dg categories.

The same argument shows that $\Sigma\Sigma\CS\simeq \Sigma \CS$ always.
\end{remark}

\begin{example}\label{ex:susp of cxs}
If $\CS=\Ch(\AS)$ and $X\in \Ch(\AS)$ is a complex, then let $X[1]$ denote the complex with chain objects $X[1]^k = X^{k+1}$ (so $X[1]$ is $X$ shifted to the left) and differential $d_{X[1]} = -d_X$. 

Observe that the identity $\Id_X$ can be regarded as a closed degree 1 closed morphism $\theta_X:X[1]\rightarrow X$ (for $\theta_X$ to be closed it is important that the differential on $X[1]$ is negated).  Thus the notation for $X[1]$ is the suspension of $X$ in the sense described above, thereby justifying the notation.  If $\AS$ is additive then it follows that $\Ch(\AS)$ is also additive as well as suspended, and $\Sigma\Ch(\AS)\simeq \Ch(\AS)$.
%
\end{example}

\begin{example}
Let $\AS$ be an additive $\k$-linear category, which we may regard as a dg category with trivial grading and differential.   It is an easy exercise in universal properties to show that $(X\oplus Y)[k]\cong X[k]\oplus Y[k]$, where the former `$\oplus$' denotes direct sum in $\AS$ and the latter denotes formal direct sum, in $\Sigma\AS$.  Thus each object in $\Sigma\AS$ is isomorphic (after collecting like terms) to a formal direct sum of the form $\bigoplus_{k\in \Z} X^k[-k]$ with $X^k=0$ for all but finitely many $k$. Such objects may be visualized as sequences, or $\Z$-graded objects of $\CS$, in which $X^k$ is placed in degree $k$ (using the cohomological convention for shift $[1]$).

Thus, $\Sigma \AS$ is equivalent to the dg category of finite complexes over $\AS$ with zero differential.  This dg category has trivial differential but nontrivial $\Z$-grading.  The process of ``turning on differentials'' is sometimes called twisting, and can be expressed in the language of abstract dg categories.  We consider this next.
\end{example}
\subsection{Twists}
\label{ss:twists}

\begin{definition}
If $\CS$ is a dg category, then let $\Tw(\CS)$ denote the category whose objects are formal expressions $\tw_\a(X)$ with $X\in \CS$ and $\a\in \End_\CS(X)$ is a degree 1 endomorphism satisfying the \emph{Maurer-Cartan} equation
\begin{equation}\label{eq:MC}
d_\CS(\a)+\a\circ \a = 0.
\end{equation}
Morphism spaces in $\Tw(\CS)$ are defined by
\[
\Hom_{\Tw(\CS)}^l\Big(\tw_\a(X),\tw_\b(Y))\Big) \ := \ \Hom_\CS^l(X,Y)
\]
with twisted differential
\[
d_{\Tw(\CS)}(f) = d_{\CS}(f) +\b\circ f - (-1)^{|f|}f\circ \a.
\]
The Leibniz rule for $d_{\Tw(\CS)}$ is easily checked, as is the fact that $d_{\Tw(\CS)}^2=0$.  Thus, $\Tw(\CS)$ defines a dg category.  This category is called the \emph{twisted envelope of $\CS$}.  The category $\Tw(\Sigma\CS)$ (see Definition \ref{def:add sus hull}) is called the category of \emph{twisted complexes} over $\CS$ \cite{BonKap90}.
\end{definition}

To  motivate the definition of $\Tw(\CS)$, consider the following.  Let $X,X'\in \CS$ be objects.  We say that $X'$ is a \emph{twist}  of $X$ if there is a degree zero invertible morphism $\phi:X\rightarrow X'$ (not necessarily closed).  Given such a morphism $\phi$, we define an endomorphism $\a\in \End_\CS(X)$ of degree 1 by $\a:=\phi\inv\circ d_\CS(\phi)$ or, equivalently, either of the equations
\[
d_\CS(\phi) = \phi\circ \a,\qquad\qquad d_\CS(\phi\inv)=-\a\circ \phi\inv.
\]
The fact that $\a$ satisfies equation \eqref{eq:MC} is an easy computation:
\[
0= d_\CS^2(\phi) = d_\CS(\phi\circ \a) = d_{\CS}(\phi)\circ \a + \phi\circ d_{\CS}(\a) = (\phi\circ \a)\circ \a + \phi\circ d_\CS(\a).
\]
In this case we say that $X'$ is a (the) \emph{twist of $X$ by $\a$} and we write $X'\cong \tw_\a(X)$. 

Then $\Tw(\CS)$ can be thought of as the category obtained from $\CS$ by formally adjoining all twists of all objects.  The nature of hom complexes $\Tw(\CS)$ can be explained as follows.    If $X'\cong \tw_\a(X)$ and $Y'\cong \tw_\b(Y)$ then we have an identification of hom spaces
\[
\Hom_{\CS}(X',Y')\rightarrow \Hom_\CS(X,Y),\qquad\qquad g\mapsto \psi\inv \circ g \circ \phi
\]
where $\phi:X\rightarrow X'$ and $\psi:Y\rightarrow Y'$ are degree zero invertible maps with $d_\CS(\phi) = \phi\circ \a$ and $d_\CS(\psi)=\psi\circ \b$.  This identification preserves grading but not differentials.  Indeed, the differential on $\Hom_\CS(X,Y)$ induced by that on $\Hom_\CS(X',Y')$ is given by
\begin{eqnarray*}
f &\mapsto & \psi\inv\circ d_\CS(\psi\circ f\circ \phi\inv )\circ \phi\\
&= & \psi\inv\circ \Big(d_\CS(\psi)\circ f\circ \phi\inv +\psi\circ d_\CS(f)\circ \phi\inv + \psi\circ f\circ d_\CS(\phi\inv)\Big)\circ \phi\\
&= & \psi\inv\circ \Big(\psi\circ \b\circ f\circ \phi\inv +\psi\circ d_\CS(f)\circ \phi\inv - \psi\circ f\circ \a\circ \phi\inv\Big)\circ \phi\\
&= &  \b\circ f\ +  d_\CS(f)- f\circ \a.
\end{eqnarray*}
This explains the definition of $\Tw(\CS)$.

\begin{remark}
We say that $\CS$ is \emph{closed under twists} if the twist of $X$ by $\a$ exists as an object of $\CS$ for all $X\in \CS$ and all Maurer-Cartan endomorphisms $\a\in \End_\CS(X)$.  In this case the canonical dg functor $\CS\rightarrow \Tw(\CS)$ is an equivalence of dg categories.  It is not hard to see that $\Tw(\CS)$ is closed under twists, so $\Tw(\Tw(\CS))\simeq \Tw(\CS)$ always (see \S \ref{sss:additivity} below).
\end{remark}

\begin{example}
Let $\AS$ be a $\k$-linear category and, and let $\AS^\Z$ be the dg category category whose objects are sequences of objects in $\AS$, regarded as a dg category with zero differential (but nontrivial grading on hom spaces).  Then $\Tw(\AS^\Z) =\Ch(\AS)$, the usual dg category of complexes.
\end{example}

\begin{example}
In the case $\CS=\Ch(\AS)$ and $(X,\d_1)$, $(X,\d_2)$ are complexes, the identity map gives a degree zero invertible morphism $(X,\d_1)\rightarrow (X,\d_2)$ whose associated Maurer-Cartan endomorphism is $\d_{2}-\d_1$. 

Conversely, $\a$ is a Maurer-Cartan endomorphism of $(X,\d)$ if and only if $[\d,\a]+\a^2=0$, which occurs if and only if $(\d+\a)^2=0$.  In this case $(X,\d+\a)$ is the twist of $(X,\d)$ by $\a$.

Finally, $\Tw(\Ch(\AS))$ is equivalent to $\Ch(\AS)$ via the dg functor $\Tw(\Ch(\AS))\rightarrow \Ch(\AS)$ sending $\tw_\a((X,\d))\mapsto (X,\d+\a)$.

\end{example}

\subsubsection{Additivity of twists}
\label{sss:additivity}

Let $\a,\b\in \End^1_\CS(X)$ be endomorphisms such that $\a$ and $\a+\b$ both satisfy the Maurer-Cartan equation.  Consider $\id_X$ as a degree zero invertible morphism $\phi:\tw_\a(X)\rightarrow \tw_{\a+\b}(X)$ in $\Tw(\CS)$.  The differential of this morphism is
\[
d_{\Tw(\CS)}(\phi) = 0 + (\a+\b)\circ \id_X - \id_X\circ \a = \b = \phi\circ \b,
\]
which shows that
\begin{equation}\label{eq:tw additivity}
\tw_{\a+\b}(X)\cong \tw_\b(\tw_\a(X)).
\end{equation}
Conversely, $\b\in \End^1_\CS(X)$ represents a Maurer-Cartan endomorphism of $\tw_\a(X)$ if and only if
\[
d_{\CS}(\b) + \a\circ \b + \b\circ \a +\b^2 =0
\]
which, in the presence of $d_\CS(\a)+\a^2=0$, is equivalent to
\[
d_\CS(\a+\b) + (\a+\b)^2=0.
\]

Thus, $\tw_\b(\tw_\a(X))$ is a well-defined object of $\Tw(\Tw(\CS))$ if and only if $\tw_\a(X)$ and $\tw_{\a+\b}(X)$ are well-defined objects of $\Tw(\CS)$, in which case we have a canonical isomorphism \eqref{eq:tw additivity}.

\subsection{Mapping cones}
\label{ss:cones}
\begin{definition}\label{def:cone}
If $f:X\rightarrow Y$ is a closed degree zero morphism in $\CS$ then $\Cone(f)$ is defined to be the twisted complex
\[
\Cone(f) \ = \ \tw_{\gamma}(X[1]\oplus Y) \ \in \ \Tw(\Sigma\CS),\qquad\quad \gamma = \sqmatrix{0&0\\f&0}.
\]
More generally, if $f:\tw_\a(X)\rightarrow \tw_\b(Y)$ is a closed morphism in $\Tw(\Sigma\CS)$ then
\[
\Cone(f) \ = \ \tw_{\gamma}(X[1]\oplus Y) \ \in \ \Tw(\Sigma\CS),\qquad\quad \gamma = \sqmatrix{-\a&0\\f&\b},
\]
which corresponds to the image of
\[
\tw_{\smMatrix{0&0\\f&0}}\left(\tw_{\smMatrix{-\a&0\\0&\b}}(X[1]\oplus Y)\right)
\]
under the equivalence $\Tw(\Tw(\Sigma \CS))\rightarrow \Tw(\Sigma \CS)$.
\end{definition}

The following is standard (see Theorem 2.10 in Chapter 4 of Spanier \cite{spanier}).
\begin{lemma}\label{lemma:cones detect equiv}
Let $f\in \Hom^0_\CS(X,Y)$ be closed.  Then $\Cone(f)$ is contractible if and only if $f$ is a homotopy equivalence.
\end{lemma}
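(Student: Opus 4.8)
The plan is to prove both implications by a direct computation with $2\times 2$ block matrices, after unwinding the definition of the mapping cone. Since $f$ has degree $0$, we are in the first case of Definition \ref{def:cone}: $\Cone(f)=\tw_\gamma(Z)$ with $Z:=X[1]\oplus Y\in\Sigma\CS$ and $\gamma=\smMatrix{0&0\\f&0}$. A degree $-1$ endomorphism of $Z$ in $\Sigma\CS$ is a matrix $\eta=\smMatrix{p&q\\r&s}$ with $p\in\End^{-1}_\CS(X)$, $q\in\Hom^0_\CS(Y,X)$, $r\in\Hom^{-2}_\CS(X,Y)$, $s\in\End^{-1}_\CS(Y)$. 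The one computation underlying everything is that, using $d_{\Tw(\Sigma\CS)}(\eta)=d_{\Sigma\CS}(\eta)+\gamma\circ\eta+\eta\circ\gamma$ (valid as $|\eta|=-1$) together with the fact that $d_{\Sigma\CS}$ applies $d_\CS$ entrywise with a sign $-1$ on the row into the shifted summand $X[1]$,
\[
d_{\Tw(\Sigma\CS)}(\eta)\;=\;\smMatrix{-d_\CS(p)+qf & -d_\CS(q)\\ d_\CS(r)+fp+sf & d_\CS(s)+fq}.
\]

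For ``$\Cone(f)$ contractible $\Rightarrow$ $f$ a homotopy equivalence'' I would take a contracting homotopy $\eta$ of $\Cone(f)$, i.e.\ with $d_{\Tw(\Sigma\CS)}(\eta)=\Id_{\Cone(f)}=\smMatrix{\Id_X&0\\0&\Id_Y}$, and simply read off the entries: $d_\CS(q)=0$, $d_\CS(-p)=\Id_X-qf$, and $d_\CS(s)=\Id_Y-fq$. This is exactly the statement that $(f,q,-p,s)$ is the data of a homotopy equivalence $X\simeq Y$ in the sense of Definition \ref{def:closed etc}; in particular $f$ represents an isomorphism in $H^0(\CS)$. (The remaining equation $d_\CS(r)+fp+sf=0$ plays no role in this direction.)

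For the converse, start from the data $f,g,h,k$ of a homotopy equivalence, so $d_\CS(h)=\Id_X-gf$ and $d_\CS(k)=\Id_Y-fg$. The natural first guess is $\eta_0:=\smMatrix{-h&g\\0&k}$, and plugging it into the boxed formula and simplifying with the two homotopy identities yields $d_{\Tw(\Sigma\CS)}(\eta_0)=\Id-\bar\delta$, where $\delta:=fh-kf\in\Hom^{-1}_\CS(X,Y)$ and $\bar\delta:=\smMatrix{0&0\\\delta&0}$; a short Leibniz computation gives $d_\CS(\delta)=0$, so $\bar\delta$ is a closed degree-$0$ endomorphism of $\Cone(f)$ with $\bar\delta^2=0$ (it is strictly lower triangular). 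The tempting move is to cancel the error $-\bar\delta$ by adding a correction $\smMatrix{0&0\\r&0}$ to $\eta_0$, but that would require $\delta$ to be \emph{exact} in $\CS$, which is not part of the given data --- this is the one genuine obstacle. The remedy is to correct multiplicatively: set $\eta:=\eta_0\circ(\Id+\bar\delta)$. Since identities are closed and $d_{\Tw(\Sigma\CS)}(\bar\delta)=0$, the Leibniz rule gives $d_{\Tw(\Sigma\CS)}(\eta)=d_{\Tw(\Sigma\CS)}(\eta_0)\circ(\Id+\bar\delta)=(\Id-\bar\delta)(\Id+\bar\delta)=\Id-\bar\delta^2=\Id$, so $\Id_{\Cone(f)}$ is exact, i.e.\ $\Cone(f)$ is contractible.

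Apart from spotting that multiplicative correction, the argument is just bookkeeping with the signs coming from $\Sigma\CS$ and from the twist. One could alternatively argue structurally --- $H^0(\Tw(\Sigma\CS))$ is triangulated and $\Cone(f)$ fits into a triangle $X\xrightarrow{f}Y\to\Cone(f)\to X[1]$, so $\Cone(f)\simeq 0$ iff $f$ is invertible in $H^0(\CS)$, which is the definition of homotopy equivalence --- but the explicit computation above stays entirely inside the dg category and is shorter, which is presumably why the elementary route is preferred here.
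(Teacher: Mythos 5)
Your proof is correct and follows essentially the same route as the paper: the same matrix computation of $d_{\Tw(\Sigma\CS)}$ on a degree $-1$ endomorphism of $X[1]\oplus Y$ gives the forward direction by reading off entries, and for the converse the obstruction is the closed cycle $\delta=fh-kf$. Your multiplicative correction $\eta_0\circ(\Id+\bar\delta)$ is a slicker packaging, but it expands to $\smMatrix{-h+g\delta & g\\ k\delta & k}$, which is exactly the paper's modified data $h'=h-g\delta$, $m=k\delta$, so the two arguments produce the identical contracting homotopy.
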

\begin{proof}
Let $H\in \End^{-1}_\CS(X[1]\oplus Y)$ be given, written in terms of components as
\[
H : = \sqmatrix{-h & g \\ m & k}.
\]
The differential of $H$, regarded as an endomorphism of $\Cone(f) = \tw_{\smMatrix{0&0\\f&0}}(X[1]\oplus Y)$ is
\begin{eqnarray*}
d_{\End(C_f)}(H)
& =&\sqmatrix{d(h) & -d(g) \\ d(m) & d(k)} +   \sqmatrix{0 & 0 \\ f & 0}\sqmatrix{-h & g \\ m & k} + \sqmatrix{-h & g \\ m & k}
\sqmatrix{0 & 0 \\ f & 0}\\
& =&\sqmatrix{d(h) & -d(g) \\ d(m) & d(k)} +   \sqmatrix{g  f & 0 \\ -f  h + k f & f  g}.
\end{eqnarray*}
This equals $\Id_{\Cone(f)}$ if and only if
\begin{enumerate}
\item $d(h) = \Id_X - g  f$.
\item $d(g) = 0$.
\item $d(m) = fh -  kf$.
\item $d(k) = \Id_Y - f  g$.
\end{enumerate}
Thus, if $\Cone(f)$ is contractible then there is a closed $g\in \Hom(Y,X)$ such that $f g\simeq \Id_Y$ and $g f\simeq \Id_X$.

To complete the proof, we have to show that if we are given morphisms $g\in \Hom^{-1}_\CS(Y,X)$, $h\in \End^{-1}_\CS(X)$, $k\in \End^{-1}_\CS(Y)$ satisfying (1), (2), (4) above, then these data can always be chosen so that the degree 0 cycle $z:=kf- fh$  is a boundary (this is exactly the content of equation (3) above).

To this end, assume that are given $f,g,h,k$ satisfying $d(f)=0$ as well as (1), (2), (4) above. Define the following elements
\[
z:=fh-kf,\qquad h':=h-gz, \qquad  m:= kz.
\] 

An easy exercise shows that
\begin{eqnarray*}
d(h') & =& \Id_X - gf\\
d(m) & =& fh' - kf
\end{eqnarray*}
Thus $(-h',g,m,k)$ are the components of a null-homotopy for $\Cone(f)$.
\end{proof}

\subsection{Twists of contractible objects}
\label{ss:twists of contractible}
The following question is quite natural, and initiates the subject of homological perturbation theory.

\begin{question}
If $X\in \CS$ is contractible and $\tw_\a(X)$ is a twist of $X$, then under what additional conditions is $\tw_\a(X)$ contractible?
\end{question}
Clearly some conditions are necessary.  For instance if $\CS= \Ch(\AS)$ is a dg category of complexes and $(X,\d_X)$ is a contractible complex then $\tw_{-\d_X}(X,\d_X) = (X,0)$  is not contractible except in the trivial case $X=0$.  The following is well known.

\begin{lemma}[Twists of contractible complexes]\label{lemma:tw of zero}
Let $X\in \CS$ be an object and $h_0\in \End^{-1}_\CS(X)$ such that $d_\CS(h_0)=\Id_X$.  If $\tw_\a(X)$ is a twist of $X$ such that $\Id_X+\a\circ h_0$ is invertible in $\End^0_\CS(X)$ then $\tw_\a(X)$ is contractible, with null-homotopy
\[
h \ := \ h_0\circ (\Id_X+\a\circ h_0)\inv.
\]
\end{lemma}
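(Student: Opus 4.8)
The plan is to verify directly that the proposed $h$ is a contracting homotopy for $\tw_\a(X)$, i.e.~that $d_{\Tw(\CS)}(h) = \Id_X$, where the differential on $\End_{\Tw(\CS)}(\tw_\a(X))$ is $d_{\Tw(\CS)}(f) = d_\CS(f) + \a\circ f - (-1)^{|f|}f\circ \a$. Since $h$ has degree $-1$, this reads $d_{\Tw(\CS)}(h) = d_\CS(h) + \a\circ h + h\circ \a$. First I would introduce the abbreviation $u := \Id_X + \a\circ h_0 \in \End^0_\CS(X)$, so that $h = h_0\circ u\inv$. The key preliminary computation is the differential of $u$: since $d_\CS(\Id_X)=0$, $d_\CS(\a)=-\a\circ\a$ (Maurer--Cartan), and $d_\CS(h_0)=\Id_X$, the Leibniz rule gives
\[
d_\CS(u) = d_\CS(\a)\circ h_0 - \a\circ d_\CS(h_0) = -\a\circ\a\circ h_0 - \a\circ \Id_X = -\a\circ(\Id_X + \a\circ h_0) = -\a\circ u.
\]

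Next I would use this, together with formula \eqref{eq:d of inverse} from Remark \ref{rmk:consequences} applied to the degree zero morphism $u$, namely $d_\CS(u\inv) = -u\inv\circ d_\CS(u)\circ u\inv = u\inv\circ \a\circ u\circ u\inv = u\inv\circ\a$. Then by the Leibniz rule,
\[
d_\CS(h) = d_\CS(h_0)\circ u\inv - h_0\circ d_\CS(u\inv) = u\inv - h_0\circ u\inv\circ \a.
\]
Now I assemble the three terms. We have $\a\circ h = \a\circ h_0\circ u\inv = (u - \Id_X)\circ u\inv = \Id_X - u\inv$, using $\a\circ h_0 = u - \Id_X$. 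And $h\circ\a = h_0\circ u\inv\circ\a$, which exactly cancels the second term of $d_\CS(h)$ above. Therefore
\[
d_{\Tw(\CS)}(h) = \big(u\inv - h_0\circ u\inv\circ\a\big) + \big(\Id_X - u\inv\big) + h_0\circ u\inv\circ\a = \Id_X,
\]
as desired. Finally I would note that $h$ indeed lies in $\End^{-1}_\CS(X) = \Hom^{-1}_{\Tw(\CS)}(\tw_\a(X),\tw_\a(X))$, so it is a genuine null-homotopy and $\tw_\a(X)\simeq 0$.

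I do not anticipate a serious obstacle here; the argument is a short manipulation once the identity $d_\CS(u) = -\a\circ u$ is in hand, and the only subtlety is bookkeeping the signs in the twisted differential (all the relevant morphisms $h$, $h_0$, $\a$ have odd degree, so the signs $(-1)^{|f|}$ work out cleanly) and remembering that $u\inv$ is automatically well-defined once $u$ is invertible in $\End^0_\CS(X)$. The only mild point worth spelling out is why $h_0\circ u\inv\circ\a$ appears with the same left/right placement in $d_\CS(h)$ and in $h\circ\a$ so that the cancellation is exact — this is immediate from $h = h_0\circ u\inv$ and is not really a difficulty. A remark I might add afterward: the invertibility hypothesis on $\Id_X + \a\circ h_0$ can equivalently be phrased in terms of $\Id_X + h_0\circ\a$, and when $\a$ is suitably "small" (e.g.~locally nilpotent, as in the filtered settings of classical homological perturbation theory) this invertibility is automatic, with $u\inv$ given by the Neumann series $\sum_{n\geq 0}(-\a\circ h_0)^n$.
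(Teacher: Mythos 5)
Your proof is correct and is precisely the computation the paper intends: the paper leaves this lemma as an exercise whose only hint is the rule $d_\CS(\phi\inv)=-\phi\inv\circ d_\CS(\phi)\circ\phi\inv$, which is exactly the identity your argument pivots on (via $d_\CS(u)=-\a\circ u$, hence $d_\CS(u\inv)=u\inv\circ\a$). The sign bookkeeping in the Leibniz rule and in the twisted differential is handled correctly throughout.
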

Note that $(\Id_X+h_0\circ \a)\inv = \Id_X - h_0\circ (\Id_X+\a\circ h_0)\inv \circ \a$, and the above null-homotopy can also be written as
\[
h\ := \ (\Id_X+h_0\circ \a)\inv\circ h_0.
\]
\begin{proof}
Exercise. It is useful to keep in mind the rule
\[
d_\CS(\phi\inv) = - \phi\inv\circ d_\CS(\phi)\circ \phi\inv.
\]
for all invertible morphisms $\phi\in \End_\CS^0(X)$.
\end{proof}

\begin{remark}\label{rmk:loc finiteness}
In most applications of Lemma \ref{lemma:tw of zero} the null-homotopy $h$ is well-defined because the series expansion
\[
(\Id_X+\a\circ h_0)\inv \ \ = \ \ \sum_{k\geq 0}(-1)^k h_0\circ (\a\circ h_0)^k
\]
satisfies appropriate local finiteness conditions.  For instance if $X$ admits a finite direct sum decomposition $X =\bigoplus_{i\in I} X_i$ with respect to which
\begin{enumerate}
\item the initial null-homotopy $h_0$ is represented by a diagonal (or lower triangular) matrix,
\item the twist $\a$ is represented by a strictly lower triangular matrix with respect to some partial order on $I$,
\end{enumerate}
then $\Id_X+\a\circ h_0$ is represented by a lower triangular matrix with $\Id_{X_i}$ on the diagonal, hence is invertible. 
\end{remark}

\begin{remark}
In the setup of Remark \ref{rmk:loc finiteness} the hypothesis that $I$ be finite can be replaced with the hypothesis that $I$ satisfy the \emph{upper finiteness} or \emph{ascending chain} condition.  See Corollary \ref{cor:onesided replacement} (with curvature $z=0$).
\end{remark}

\section{Strong homotopy equivalences and curvature}
\label{s:she curved}
Let $\CS$ be a dg category.  Above we saw how to construct dg categories $\Sigma\CS$ and $\Tw(\CS)$.  The composite $\Tw(\Sigma \CS)$ is called the category of twisted complexes over $\CS$; if $\AS$ is a $\k$-linear additive category then $\Tw(\Sigma \AS)$ is just the usual category of bounded complexes over $\AS$.   In this section we consider a generalization of this construction to the category of \emph{curved twisted complexes}.

Even though the notion of curved complexes may initially seem exotic, many important concepts in dg categories (for instance strong homotopy equivalence, as we will see) are most naturally expressed in the language of curved complexes. 

\subsection{Curved twisted complexes}
\label{ss:curved tw}
\begin{definition}
Let $\CS$ be a dg category.  A \emph{curved twisted complex} in $\CS$ is a triple $(X,\a,z)$ where $X\in \CS$, $z\in \End_\CS(X)$ is a degree 2 endomorphism (not necessarily closed), and $\a\in \End_\CS(X)$ is a degree 1 endomorphism satisfying the Maurer-Cartan equation (with curvature $z$)
\[
d(\a)+\a^2 = z.
\]
A \emph{morphism} of curved twisted complexes $(X,\a,z_X)\rightarrow (Y,\b,z_Y)$ is a morphism $f:X\rightarrow Y$ such that $z_Y\circ f - f\circ z_X$. The differential of $f$, regarded as a morphism of curved twisted complexes, is
\[
d_{\twc(\CS)}(f) = d_\CS(f) + \b\circ f - (-1)^{|f|}f\circ \a.
\]
With these definitions, the curved twisted complexes form a dg category, denoted $\cTw(\CS)$.
\end{definition}

\begin{example}
If $\CS =\AS^\Z$ is the category of $\Z$-graded objects, then a curved twisted complex over $\CS$ is just a triple $(X,\d,z)$ where $X\in \AS^\Z$ and $\d$ is a degree 1 endomorphism of $X$ satisfying $\d^2=z$.  Thus a curved twisted complex in this case is just a ``complex'' whose differential does not square to zero.  We will refer to such objects as curved complexes.

Morphisms of curved complexes need not commute with the differentials $\d$, but they do need to commute with their curvatures $\d^2$.
\end{example}

\begin{example}
Let $\e$ be a formal indeterminate of degree 2 satisfying $\e^2=0$, and let $\CS=\AS^{\Z}\otimes \k[\e]/\e^2$.  Objects of this category are the same as objects of $\AS$, but we adjoin $\e$ to all hom complexes.  In other words a degree $l$ morphism $f:X\rightarrow Y$ in $\CS$ is a formal sum
\[
f_0+ \e f_1
\]
with $f_0\in \Hom_{\AS^\Z}^l(X,Y)$, and $f_1\in \Hom_{\AS^\Z}^{l-2}(X,Y)$, and composition is defined by
\[
(f_0+ \e f_1)\circ (g_0+\e g_1)  = f_0\circ g_0 + \e(f_1\circ g_0+f_0\circ g_1).
\]
Let $(X,\d)\in \Ch(\AS)$ be an ordinary chain complex and $t:X\rightarrow X$ a degree zero chain map which is null-homotopic: $t = \d\circ h + h\circ \d$ for some degree $-1$ map $h:X\rightarrow X$.  Then $\d+\e h$ gives $X$ the structure of a curved complex with curvature $\e t$, since
\[
(\d+\e h) = \d^2 + \e (\d\circ h + h\circ \d) = 0+\e t.
\]

Thus $\cTw(\CS)$ contains as a full dg subcategory a category whose objects are complexes $(X,\d)$ equipped with a null-homotopic endomorphism (and a choice of trivializing homotopy $h$).

As a special case we find that the data of a curved complex $(X, \d+\e h, \e \Id)$ is equivalent to the data of a contracting homotopy for the complex $(X,\d)$.
\end{example}

\subsection{Strong homotopy equivalence}
\label{ss:she}

We introduce the notion of strong homotopy equivalence between two objects of a dg category and reinterpret in the language of curved twisted complexes.  The definition below is adapted from \cite{MarklIdeal}.

\begin{definition}\label{def:Rshe}
Let $\e$ be a formal indeterminate of cohomological degree 2.  Let $\she$ denote the $\k\llbracket \e \rrbracket$-linear dg category with two objects $X,Y$ and morphisms freely generated by morphisms $f,g,h,k$ where $f:X\rightarrow Y$ and $g:Y\rightarrow X$ have degree zero, $h:X\rightarrow X$ and $k:Y\rightarrow Y$ have degree $-1$, with the differentials defined by
\begin{subequations}
\begin{eqnarray}
d(f)&=& \e (fh -kf)\label{eq:she df}\\
d(g) &=& \e(gk- hg)\label{eq:she dg}\\
d(h)&=&\Id_X-gf - \e h^2\label{eq:she dh}\\
d(k)&=&  \Id_Y-fg - \e k^2\label{eq:she dk}
\end{eqnarray}
\end{subequations}
\end{definition}
Since we impose no relations between morphisms, the differential extends uniquely to arbitrary morphisms in $\she$ via the Leibniz rule.  However, it must be checked that $d^2$ is zero on morphisms.  It suffices to check this on the generating morphisms, which is an easy exercise.  For instance, to check that $d^2(f)=0$ we must check that that right hand side of \eqref{eq:she df} is automatically closed.  Indeed:
\[
d(fh)=d(f)h + fd(h) = \e(fh-kf)h +f(1-gf-\e h^2) = f-fgf - \e  kfh,
\]
and
\[
d(kf) =d(k) f - kd(f) = (1-fg-\e k^2)f - \e k(fh-kf) = f-fgf - \e  kfh
\]
and it follows that $d^2(f)=0$.  The other identities $d^2(g)=d^2(h)=d^2(k)=0$ are proven just as easily.

Now, let $\CS$ be a dg category and consider the dg category $\CS\llbracket \e\rrbracket$.  Objects of this category are the same as objects of $\CS$, and morphism complexes are obtained from those in $\CS$ by extending scalars from $\k$ to $\k\llbracket \e \rrbracket$.

\begin{definition}\label{def:she}
A \emph{strong homotopy equivalence} in a dg category $\CS$ is a $\k\llbracket\e\rrbracket$-linear dg functor $F:\she\rightarrow \CS\llbracket \e \rrbracket$.  More concretely, a strong homotopy equivalence consists of a pair of objects $X,Y\in \CS$ and formal series of morphisms
\[
f(\e) = \sum_{i\geq 0} f_i \e^i\qquad (f_i\in \Hom^{-2i}_\CS(X,Y)),
\]
\[
g(\e) = \sum_{i\geq 0} g_i \e^i\qquad (g_i\in \Hom^{-2i}_\CS(Y,X))
\]
\[
h(\e) = \sum_{i\geq 0} h_i \e^i \qquad (h_i\in \Hom^{-1-2i}_\CS(X,X))
\]
\[
k(\e) = \sum_{i\geq 0} k_i \e^i \qquad (k_i\in \Hom^{-1-2i}_\CS(Y,Y))
\]
satisfying relations \eqref{eq:she df}, \eqref{eq:she dg}, \eqref{eq:she dh}, and \eqref{eq:she dk}.  Here we are abusing notation, denoting objects and morphisms in $\she$ and their images in $\CS\llbracket\e \rrbracket$ by the same notation.  
\end{definition}
Setting $\e=0$, we see in particular that $f_0,g_0,h_0,k_0$ give the data of a homotopy equivalence $X\simeq Y$ in $\CS$.  Let us attempt to explain the (obstruction theoretic) meaning of some of the other components $f_i,g_i,h_i,k_i$.  The degree $\e^1$ part of \eqref{eq:she df}, for instance is equivalent to
\[
d(f_1) = f_0h_0 - k_0f_0.
\]
In other words, the homotopy equivalence $f_0$ intertwines the homotopies $h_0,k_0$ up to homotopy.  The component $g_1$ admits a similar interpretation.

The degree $\e^1$ part of \eqref{eq:she dh} yields
\[
d(h_1) = -g_1f_0 + g_0 f_1 - h_0^2.
\]
The right hand side is some degree $-2$ element of $\End_\CS(X)$ (automatically closed from the proof that $d_{s.h.e.}^2=0$), which is null-homotopic via $h_1$.  In this way all the components $f_i,g_i,h_i,k_i$ for $i\geq 1$ can be realized as killing an infinite family of obstructions constructed from themselves (and $f_0,g_0,h_0,k_0$).

\begin{remark}
Markl \cite{MarklIdeal} has shown that the dg category $\she$ which represents strong homotopy equivalences is a cofibrant resolution of the dg category $\RS_{\text{iso}}$ which represents isomorphisms (consists of two objects $X$ with closed degree zero mutually inverse isomorphisms $X\leftrightarrow Y$).  
\end{remark}

\subsubsection{Strong homotopy equivalences and curvature}
\label{sss:she and curvature}

If $f,g,h,k$ give a strong homotopy equivalence $X\simeq Y$ in $\CS$, then \eqref{eq:she dh}, \eqref{eq:she dk} are equivalent to the equations
\[
d(\e h) + (\e h)^2 = \e(\Id_X - g\circ f) ,\qquad \qquad d(\e k) + (\e k)^2 = \e(\Id_Y - f\circ g),
\]
which is equivalent to saying $(X, \e h)$ and $(Y,\e k)$ are curved twisted complexes with curvature $\e(\Id_X-g\circ f)$ and $\e(\Id_Y-f\circ g)$, respectively. 

Moreover, \eqref{eq:she df}, \eqref{eq:she dg} are equivalent to the equations
\[
d(f) + (\e k) \circ f - f\circ (\e h) = 0,\qquad\qquad d(g) + (\e h)\circ g - g\circ (\e k).
\]
so $f$ and $g$ are closed morphisms of twisted complexes (note also that $f$ and $g$ intertwine the curvatures as they should: $f\circ \e(\Id_X - g\circ f) = \e(\Id_Y-f\circ g)\circ f$).  We collect all of this in the following.

\begin{observation} The data of a strong homotopy equivalence is equivalent to the data of:
\begin{enumerate}
\item curved twisted complexes $(X,\e h, z_X)$, and $(Y,\e k, z_Y)$ in $\cTw(\CS\llbracket\e \rrbracket)$.
\item closed degree zero morphisms $f:(X,\e h, z_X)\leftrightarrow (Y,\e k, z_Y):g$ such that
\[
z_X = \e (\Id_X-g\circ f),\qquad\qquad z_Y= \e(\Id_Y-f\circ g).
\]
\end{enumerate}
\end{observation}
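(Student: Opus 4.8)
The plan is to treat the statement as a direct repackaging of the four defining relations of a strong homotopy equivalence, with the curvatures forced by those relations. Given the data $(f,g,h,k)$ of a strong homotopy equivalence in $\CS$ as in Definition~\ref{def:she}, I would set
\[
z_X := \e(\Id_X - g\circ f),\qquad\qquad z_Y := \e(\Id_Y - f\circ g),
\]
regarded as degree $2$ endomorphisms of $X$ and $Y$ in $\CS\llbracket\e\rrbracket$, and then show that the identity assignment on the morphisms $f,g,h,k$ realizes the claimed equivalence of data. So the entire argument amounts to matching equations.

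For the forward direction I would first observe that multiplying \eqref{eq:she dh} by $\e$ and rearranging gives $d(\e h) + (\e h)^2 = \e(\Id_X - g\circ f) = z_X$, which is precisely the Maurer--Cartan equation exhibiting $(X,\e h, z_X)$ as a curved twisted complex; equation \eqref{eq:she dk} yields $(Y,\e k, z_Y)$ in the same way. Next I would check that $f$ and $g$ are morphisms in $\cTw(\CS\llbracket\e\rrbracket)$, i.e.\ that they intertwine the curvatures: this is the one-line computation $f\circ z_X = \e(f - f\circ g\circ f) = z_Y\circ f$ together with its mirror $g\circ z_Y = z_X\circ g$. Finally, \eqref{eq:she df} rearranges to $d(f) + (\e k)\circ f - f\circ(\e h) = 0$, which is exactly the statement $d_{\cTw(\CS\llbracket\e\rrbracket)}(f) = 0$, and \eqref{eq:she dg} likewise gives $d_{\cTw(\CS\llbracket\e\rrbracket)}(g) = 0$. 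For the converse I would simply run these computations backwards: starting from a curved twisted complex $(X,\e h, z_X)$ with $z_X = \e(\Id_X - g\circ f)$, the Maurer--Cartan equation reads $\e(d(h) + \e h^2 - \Id_X + g\circ f) = 0$, which by injectivity of multiplication by $\e$ on the hom spaces of $\CS\llbracket\e\rrbracket$ is equivalent to \eqref{eq:she dh}; one recovers \eqref{eq:she dk} the same way, and unwinding $d_{\cTw(\CS\llbracket\e\rrbracket)}(f) = 0 = d_{\cTw(\CS\llbracket\e\rrbracket)}(g)$ gives \eqref{eq:she df} and \eqref{eq:she dg}.

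Since every step is a one-line manipulation, I do not anticipate a genuine obstacle. The two points that deserve a moment's care are: (i) checking that $f$ and $g$ automatically intertwine the curvatures $z_X$ and $z_Y$ --- without this they would not even qualify as morphisms of curved twisted complexes, so this has to be verified explicitly in the forward direction (it is the short computation above); and (ii) the cancellation of the factor $\e$ in the converse direction, which is legitimate precisely because the morphism complexes of $\CS\llbracket\e\rrbracket$ are $\e$-torsion free over $\k\llbracket\e\rrbracket$. Everything else is bookkeeping.
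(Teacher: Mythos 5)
Your proposal is correct and follows essentially the same route as the paper: the paper's justification is precisely the observation that \eqref{eq:she dh}, \eqref{eq:she dk} become the Maurer--Cartan equations $d(\e h)+(\e h)^2=z_X$ and $d(\e k)+(\e k)^2=z_Y$, that \eqref{eq:she df}, \eqref{eq:she dg} become closedness of $f,g$ in $\cTw(\CS\llbracket\e\rrbracket)$, and that $f,g$ intertwine the curvatures. Your explicit remark about $\e$-torsion-freeness for the converse is a reasonable (if left implicit in the paper) point of care, but there is nothing substantively different in the two arguments.
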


\subsubsection{From homotopy equivalences to strong homotopy equivalences}
\label{sss:he to she}
In this section we show that each $X$ and $Y$ are homotopy equivalent in $\CS$ if and only if they are strongly homotopy equivalent.  We prove this by showing that both are equivalent to the strong contractibility of $\Cone(X\rightarrow Y)$, as defined next.

A \emph{strong null-homotopy} or \emph{strong contracting homotopy} for  $X$ is the data of a strong homotopy equivalence $X\simeq 0$, i.e.~a formal series
\[
h(\e) = \sum_{i\geq 0} h_i \e^i \qquad (h_i\in \Hom^{-1-2i}_\CS(X,X))
\]
satisfying $d(h(\e))+\e h(\e)^2 = \Id_X$.  In this case we call $h(\e)$ a \emph{strong contracting homotopy}.

\begin{lemma}\label{lemma:null implies strongnull}
If $h_0$ is a contracting homotopy for $X$, then
\[
h(\e) := \sum_{k\geq 0} (-1)^k C_k\e^k (h_0)^{2k+1}
\]
is a strong contracting homotopy for $X$, where $C_0,C_1,\ldots$ are the Catalan numbers.
\end{lemma}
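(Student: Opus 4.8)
The plan is to verify directly that the proposed series $h(\e) = \sum_{k\geq 0} (-1)^k C_k\e^k (h_0)^{2k+1}$ satisfies the defining equation $d(h(\e)) + \e\, h(\e)^2 = \Id_X$ of a strong contracting homotopy, by comparing coefficients of $\e^n$ on both sides. Since $h_0$ has odd degree $-1$, the Leibniz rule gives $d((h_0)^{2k+1}) = \sum_{j=0}^{2k} (-1)^j (h_0)^j d(h_0) (h_0)^{2k-j}$; using $d(h_0) = \Id_X$ and the fact that $(h_0)^j \Id_X (h_0)^{2k-j} = (h_0)^{2k}$, the $2k+1$ terms in this sum have signs $(-1)^j$ for $j=0,\dots,2k$, so they telescope to a single $(h_0)^{2k}$. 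Hence $d\big((h_0)^{2k+1}\big) = (h_0)^{2k}$, and the coefficient of $\e^n$ in $d(h(\e))$ is $(-1)^n C_n (h_0)^{2n}$ for $n\geq 1$, while the $\e^0$ coefficient is $C_0 (h_0)^0 = \Id_X$.

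Next I would compute $\e\, h(\e)^2$. We have $h(\e)^2 = \sum_{i,j\geq 0} (-1)^{i+j} C_i C_j \e^{i+j} (h_0)^{2i+2j+2}$, so the coefficient of $\e^n$ in $\e\, h(\e)^2$ is $\sum_{i+j = n-1} (-1)^{n-1} C_i C_j (h_0)^{2n}$ for $n\geq 1$ (and $0$ for $n=0$). Therefore the coefficient of $\e^n$ ($n\geq 1$) in $d(h(\e)) + \e\, h(\e)^2$ equals
\[
(-1)^n\Big( C_n - \sum_{i+j=n-1} C_i C_j\Big)(h_0)^{2n},
\]
which vanishes precisely because of the Catalan recurrence $C_n = \sum_{i=0}^{n-1} C_i C_{n-1-i}$. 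Together with the $\e^0$ term computed above, this shows $d(h(\e)) + \e\, h(\e)^2 = \Id_X$, as required. By definition (see \S\ref{ss:she}, the notion of strong null-homotopy, which is the data of a strong homotopy equivalence $X\simeq 0$), this exhibits $h(\e)$ as a strong contracting homotopy for $X$.

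The only genuinely delicate points are bookkeeping ones: getting the signs right in the Leibniz expansion of $d((h_0)^{2k+1})$ — where oddness of $|h_0|$ is essential for the telescoping — and confirming that the sign $(-1)^n$ factors out uniformly from both $d(h(\e))$ and $\e\, h(\e)^2$ so that the Catalan identity applies cleanly. One should also note that, unlike Lemma \ref{lemma:tw of zero}, no invertibility or local-finiteness hypothesis is needed here: the identity holds coefficient-by-coefficient in the $\e$-adic topology on $\End_\CS(X)\llbracket\e\rrbracket$, so the series makes sense unconditionally. I expect the whole argument to be short once the telescoping identity $d((h_0)^{2k+1}) = (h_0)^{2k}$ is in hand; that identity is the crux.
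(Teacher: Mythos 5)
Your proof is correct and follows essentially the same route as the paper's: both verify $d(h(\e))+\e h(\e)^2=\Id_X$ coefficientwise using the identity $d\bigl((h_0)^{2k+1}\bigr)=(h_0)^{2k}$ (which the paper states as $d(h_0^k)=h_0^{k-1}$ for $k$ odd) and the Catalan recurrence. Your Leibniz-rule expansion with the alternating signs $(-1)^j$ just makes explicit a step the paper leaves as an observation.
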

\begin{proof}
First observe that $d(h_0)=\Id_X$ implies:
\[
d(h_0^k) = \begin{cases} 0 & \text{ $k$ even} \\ h_0^{k-1} & \text{ $k$ odd}\end{cases}.
\]
The Catalan numbers satisfy the well-known recursion
\[
C_{n} = \sum_{i+j=n-1} C_i C_j ,
\]
adopting the convention that $C_i=0$ for $i<0$.  Thus,
\begin{eqnarray*}
d(h(\e)) + \e h(\e)^2 
& =& \sum_{k\geq 0} (-\e)^k C_k d(h_0^{2k+1}) + \e \sum_{i+j=k} (-\e)^k C_iC_j h_0^{2i+1}h_0^{2j+1}\\
& =& \sum_{k\geq 0} (-\e)^k C_k h_0^{2k} - \sum_{i+j=k-1\atop k\geq 1} (-\e)^k C_iC_j h_0^{2k}.
\end{eqnarray*}
All terms cancel except for the $k=0$ zero term in the sum on the left.  This term equals $\Id_X$.
\end{proof}

The following helps give some perspective on the definition of strong homotopy equivalence.
\begin{lemma}\label{lemma:she iff cone sc}
Let $f_0:X\rightarrow Y$ be a closed degree zero morphism in $\CS$.  Then a matrix of morphisms
\[
H(\e) = \sqmatrix{-h(\e) & g(\e) \\ c(\e) & k(\e)} \ \in \ \End_{\Sigma\CS\llbracket\e\rrbracket}(X[1]\oplus Y)
\]
defines a strong contracting homotopy for $\Cone(f_0)$ if and only if $f_0+\e c(\e)$, $g(\e)$, $h(\e)$, $k(\e)$ define a strong homotopy equivalence $X\simeq Y$.
\end{lemma}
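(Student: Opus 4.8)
The plan is to unwind both conditions into their defining systems of equations and observe that they coincide after a mild relabeling. First I would compute the differential of $H(\e)$ as an endomorphism of $\Cone(f_0) = \tw_{\gamma}(X[1]\oplus Y)$ with $\gamma = \smMatrix{0&0\\f_0&0}$, working in $\Sigma\CS\llbracket\e\rrbracket$ and then passing to the curved twisted complex $(X[1]\oplus Y, \e H(\e), z)$ with $z = \e(\Id - \text{(stuff)})$. Concretely, the condition that $H(\e)$ be a strong contracting homotopy for $\Cone(f_0)$ is, by the discussion in \S\ref{sss:he to she}, the single equation
\[
d_{\End(\Cone(f_0))}(H(\e)) + \e\, H(\e)^2 = \Id_{X[1]\oplus Y},
\]
where $d_{\End(\Cone(f_0))}$ is the twisted differential incorporating $\gamma$. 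Expanding $d_{\End(\Cone(f_0))}(H) = d_{\Sigma\CS\llbracket\e\rrbracket}(H) + \gamma H - H\gamma$ exactly as in the proof of Lemma \ref{lemma:cones detect equiv} (but now keeping the $\e H^2$ term and not assuming $\e=0$), and multiplying out the $2\times 2$ matrices, I would read off four scalar equations indexed by the matrix entries.

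The key step is then the bookkeeping: the $(1,1)$-entry gives $d(h(\e)) + \e h(\e)^2 = \Id_X - g(\e)f_0 - \e g(\e)c(\e)$, which upon writing $f(\e) := f_0 + \e c(\e)$ becomes precisely \eqref{eq:she dh}; similarly the $(2,2)$-entry yields \eqref{eq:she dk} with the same substitution; the $(1,2)$-entry (coming from the $g$ block) reproduces \eqref{eq:she dg}; and the $(2,1)$-entry — which involves $d(c(\e))$, $f_0 h(\e)$, $k(\e) f_0$, and the cross terms from $\e H(\e)^2$ — reorganizes, after multiplying by $\e$ and again substituting $f(\e) = f_0 + \e c(\e)$, into exactly \eqref{eq:she df}. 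One has to be careful with the sign conventions coming from the shift $[1]$ on the first summand (the $-h(\e)$ in the top-left corner of $H(\e)$ is there precisely to absorb the sign $(-1)^{b_1}$ in $d_{\Sigma\CS}$, as in Lemma \ref{lemma:cones detect equiv}), and with the degree-$2$ shift attached to $\e$ when applying the Leibniz rule; but these are exactly the same signs already navigated in the cone computation above, so no new subtlety arises.

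The main obstacle is purely the sign/degree bookkeeping in the $(2,1)$-entry: one must check that the naive matrix equation, which at first glance produces $d(c(\e)) = f_0 h(\e) - k(\e) f_0 + \e(\text{corrections})$, really is equivalent to \eqref{eq:she df} for $f(\e) = f_0 + \e c(\e)$ rather than merely to its leading-order shadow. The cleanest way to dispatch this is to note that $\gamma + \e H(\e)$ being a curved Maurer--Cartan element with the prescribed curvature is, block by block, the assertion that the diagonal blocks $(X, \e h(\e))$ and $(Y, \e k(\e))$ are curved twisted complexes with curvatures $\e(\Id_X - g(\e)f(\e))$ and $\e(\Id_Y - f(\e)g(\e))$, and that the off-diagonal blocks $f(\e)$ and $g(\e)$ are closed morphisms between them — which is exactly the reformulation of a strong homotopy equivalence recorded in the Observation of \S\ref{sss:she and curvature}. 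Thus I would organize the proof so that both sides of the claimed equivalence are matched against that Observation, reducing the lemma to the already-established dictionary between cones and pairs of objects-with-a-map, now carried out one $\e$-adic order at a time (equivalently, all at once, since everything is $\e$-adically continuous). Each implication is then a finite check on generators, and $d^2 = 0$ in both categories guarantees consistency.
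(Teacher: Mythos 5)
Your proposal is correct and follows essentially the same route as the paper: expand the single equation $d(\tilde{H})+\e \tilde{H}^2=\Id_{\Cone(f_0)}$ into its four matrix entries and match them, after substituting $f(\e)=f_0+\e c(\e)$, with relations \eqref{eq:she df}--\eqref{eq:she dk}. The one slip to fix is the sign of the twist term: since $H(\e)$ has degree $-1$, the twisted differential contributes $\gamma H+H\gamma$ (exactly as in the proof of Lemma \ref{lemma:cones detect equiv}), not $\gamma H-H\gamma$ as written; with that correction your entry-by-entry identifications agree with the paper's.
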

\begin{proof}
Below we will abbreviate by writing $g = g(\e)$, etc.  Let $\tilde{H}$ denote $H$, regarded as an endomorphism of $\Cone(f_0)$ in $\Tw(\Sigma\CS\llbracket\e\rrbracket)$.  From the definition, $H$ is a strong contracting homotopy if and only if
\[
d(\tilde{H}) + \e \tilde{H}^2 = \Id_{\Cone(f_0)},
\]
i.e.
\[
d(H) + \gamma H+H \gamma +\e H^2 = \Id_{\Cone(f_0)},
\]
where $\gamma = \smMatrix{0&0\\ f_0&0}$.    Because of the signs involved in the differential in $\Sigma\CS\llbracket\e\rrbracket$ (Definition \ref{def:add sus hull}) this is equivalent to
\[
\sqmatrix{d(h) & -d(g) \\ d(c) & d(k)} +  \sqmatrix{0&0\\ f_0 & 0} \sqmatrix{-h & g \\ c & k} +\sqmatrix{-h & g \\ c & k} \sqmatrix{0&0\\ f_0 & 0}  + \e  \sqmatrix{-h & g \\ c & k}^2 = \sqmatrix{\Id_{X} & 0 \\ 0 & \Id_Y}.
\]
which is equivalent to the system of equations
\begin{subequations}
\begin{eqnarray*}
d(h) +gf_0+\e h^2 +\e gc  &=& \Id_X \\
-d(g)-\e hg + \e gk &=& 0\\
d(c) - f_0h+kf_0 -\e c h + \e k c &=& 0\\
d(k) + f_0g+\e cg +\e k^2 &=& \Id_Y
\end{eqnarray*}
\end{subequations}
or, equivalently,
\begin{eqnarray*}
d(h) &=& \Id_X-g(f_0+\e c) - \e h^2\\
d(g)&=& \e (gk- hg)\\
d(f_0+\e c) &=& \e((f_0+\e c)h - k(f_0  +\e c))\\
d(k)&=& \Id_Y - (f_0+\e c) g- \e k^2 .
\end{eqnarray*}
This proves the lemma.
\end{proof}

\begin{proposition}\label{prop:he iff she}
Any homotopy equivalence $f_0,g_0,h_0,g_0$ in a dg category $\CS$ lifts to a strong homotopy equivalence, possibly after modification of $h_0$ or $k_0$.
\end{proposition}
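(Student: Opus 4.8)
The plan is to leverage the two lemmas just proven — Lemma~\ref{lemma:null implies strongnull} (which upgrades an ordinary contracting homotopy to a strong one) and Lemma~\ref{lemma:she iff cone sc} (which converts a strong homotopy equivalence $X\simeq Y$ into a strong contracting homotopy for $\Cone(f_0)$ and back) — together with Lemma~\ref{lemma:cones detect equiv}, which says $\Cone(f_0)$ is contractible precisely when $f_0$ is a homotopy equivalence. The logic runs around the square: homotopy equivalence $X\simeq Y$ $\Leftrightarrow$ $\Cone(f_0)$ contractible $\Leftrightarrow$ $\Cone(f_0)$ strongly contractible $\Leftrightarrow$ strong homotopy equivalence $X\simeq Y$.

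\begin{proof}
Suppose $f_0,g_0,h_0,k_0$ are the data of a homotopy equivalence $X\simeq Y$ in $\CS$. By Lemma~\ref{lemma:cones detect equiv}, the cone $\Cone(f_0)\in \Tw(\Sigma\CS)$ is contractible; fix a contracting homotopy $H_0\in \End^{-1}_{\Sigma\CS}(X[1]\oplus Y)$, so $d_{\End(\Cone(f_0))}(H_0)=\Id_{\Cone(f_0)}$. (Note: the running of Lemma~\ref{lemma:cones detect equiv} in fact produces such an $H_0$ whose components are built from $f_0,g_0,h_0,k_0$ after modifying $h_0$ to $h_0':=h_0-g_0z$ where $z:=f_0h_0-k_0f_0$; this is the ``possibly after modification of $h_0$ or $k_0$'' in the statement.) Now apply Lemma~\ref{lemma:null implies strongnull} to the object $\Cone(f_0)$ of $\Tw(\Sigma\CS)$ with its contracting homotopy $H_0$: the formal series
\[
H(\e) \ := \ \sum_{k\geq 0} (-1)^k C_k\, \e^k\, H_0^{2k+1}
\]
is a strong contracting homotopy for $\Cone(f_0)$, i.e.\ $d(\widetilde H)+\e\widetilde H^2 = \Id_{\Cone(f_0)}$ in $\Tw(\Sigma\CS\llbracket\e\rrbracket)$. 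Writing $H(\e)=\smMatrix{-h(\e)&g(\e)\\ c(\e)&k(\e)}$ in components, Lemma~\ref{lemma:she iff cone sc} then says precisely that $f_0+\e c(\e),\ g(\e),\ h(\e),\ k(\e)$ satisfy the relations \eqref{eq:she df}--\eqref{eq:she dk}, hence define a strong homotopy equivalence $X\simeq Y$. Its $\e=0$ part recovers the original homotopy equivalence (with $h_0$ replaced by $h_0'$), so this is a lift.
\end{proof}

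\noindent The only genuinely delicate point is bookkeeping: one must check that the $\e^0$-component of the strong homotopy equivalence produced really does agree with the (possibly modified) input data $f_0,g_0,h_0',k_0$, rather than being some unrelated homotopy equivalence — this follows because $H_0$ itself has $\e$-degree $0$ and appears as the leading term of $H(\e)$, and Lemma~\ref{lemma:she iff cone sc} is an exact translation that respects the $\e$-filtration. A second small point worth a sentence in the writeup is that Lemmas~\ref{lemma:null implies strongnull} and~\ref{lemma:she iff cone sc} were stated for objects/morphisms in a dg category $\CS$, and here we apply them inside $\Tw(\Sigma\CS)$; this is harmless since $\Tw(\Sigma\CS)$ is itself a dg category and $\Cone(f_0)$ is one of its objects. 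Beyond that, everything is formal consequence of the three cited lemmas, so no real obstacle arises — the proposition is essentially the assembly of the machinery built in this subsection.
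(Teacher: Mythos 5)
Your proof is correct and follows essentially the same route as the paper's: modify $h_0$ as in Lemma~\ref{lemma:cones detect equiv} to obtain a contracting homotopy $H_0$ for $\Cone(f_0)$, upgrade it to a strong contracting homotopy via the Catalan-number series of Lemma~\ref{lemma:null implies strongnull}, and translate back through Lemma~\ref{lemma:she iff cone sc}. Your remark that the lemmas are being applied to an object of $\Tw(\Sigma\CS)$ rather than of $\CS$ itself, and your check that the $\e^0$-part recovers the (modified) input data, are both accurate and slightly more careful than the paper's own write-up.
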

\begin{proof}
Suppose we are given the data of a homotopy equivalence $(f_0,g_0,h_0,k_0)$ relating $X$ and $Y$.  That is to say $f_0\in \Hom^0_\CS(X,Y)$, $g_0\in \Hom_\CS^0(Y,X)$, $h\in \End^{-1}_\CS(X)$, and $k\in \End^{-1}_\CS(Y)$ satisfy
\[
d(f_0) = d(g_0)=0,\qquad\quad d(h_0) = \Id_X - g_0f_0,\qquad\quad d(k_0)= \Id_Y - f_0g_0.
\]
After modifying $h_0$ if necessary (as in the proof of Lemma \ref{lemma:cones detect equiv}) we may also suppose we are given $f_1\in \Hom^{-2}_\CS(X,Y)$ such that
\[
d(f_1) + k_0f_0 - f_0 h_0 = 0,
\]
so
\[
H_0:= \sqmatrix{-h_0 & g_0 \\ f_1 & k_0}
\]
defines a null-homotopy for $\Cone(f)$.  It then follows that
\[
H(\e) := \sum_{l\geq 0} (-\e)^l C_l H_0^{2l}
\]
defines a strong contracting homotopy for $\Cone(f)$ by Lemma \ref{lemma:null implies strongnull}.  The data of the strong homotopy equivalence $X\simeq Y$ can then be extracted from the components of $H(\e)$ by Lemma \ref{lemma:she iff cone sc}.  In particular $f(\e),g(\e),h(\e),k(\e)$ can be written explicitly in terms of the inital data $f_0,g_0,h_0,k_0$, and $f_1$.
\end{proof}

\section{Homological perturbation}
\label{s:curved hpl}



\subsection{The curved ideal homological perturbation lemma}
\label{ss:main result}

To begin with, we introduce an auxilliary notion that will help frame the discussion below.

\begin{definition}\label{def:zhe}
Let $X,Y\in \CS$ be objects of a dg category and let $z$ be a degree 2 closed element of the center of $\CS$ (natural transformation of the identity functor).  Then a $z$-homotopy equivalence $X\simeq Y$ is the data of morphisms in $\CS$:
\[
\begin{tikzpicture}[baseline=0em]
\tikzstyle{every node}=[font=\small]
\node (a) at (0,0) {$X$};
\node (b) at (3,0) {$Y$};
\path[->,>=stealth,shorten >=1pt,auto,node distance=1.8cm, thick]
(a) edge [loop left, looseness = 5,in=215,out=145] node[left] {$h$}	(a)
([yshift=3pt] a.east) edge node[above] {$f$}		([yshift=3pt] b.west)
([yshift=-2pt] b.west) edge node[below] {$g$}		([yshift=-2pt] a.east)
(b) edge [loop left, looseness = 5,in=35,out=325] node[right] {$k$}	(b);
\end{tikzpicture}
\]
such that $\deg(f)=\deg(g)=0$, $\deg(h)=\deg(k)=-1$, satisfying
\begin{subequations}
\begin{eqnarray}
d(f)&=& z (fh -kf)\label{eq:zhe df}\\
d(g) &=& z(gk- hg)\label{eq:zhe dg}\\
d(h)&=&\Id_X-gf - z h^2\label{eq:zhe dh}\\
d(k)&=&  \Id_Y-fg - z k^2\label{eq:zhe dk}
\end{eqnarray}
\end{subequations}
\end{definition}

\begin{remark}
 $0$-homotopy equivalence is just a homotopy equivalence in the usual sense.
\end{remark}
\begin{remark}
A strong homotopy equivalence $X\simeq Y$ is the same thing as an $\e$-homotopy equivalence $X\simeq Y$ in $\CS\llbracket \e\rrbracket$.
\end{remark}
\begin{remark}
If $f(\e),g(\e),h(\e),k(\e)$ are the data of a strong homotopy equivalence $X\simeq Y$ then specializing $\e=z$ yields a $z$-homotopy equivalence $X\simeq Y$ in $\CS$, provided that the series
\[
f(z)= \sum_{l\geq 0} f_l z^l,\qquad g(z) = \sum_{l\geq 0} g_l z^l,\qquad h(z)  = \sum_{l\geq 0} h_l z^l,\qquad k(z)= \sum_{l\geq 0} k_l z^l
\]
are well-defined morphisms in $\CS$ (for instance if $z$ is nilpotent or more generally if there is a two-sided ideal $\ng$, with respect to which $\CS$ is complete, containing $z^l$ for some $l\in \N$).
\end{remark}

\begin{lemma}\label{lemma:zhe and maps}
Let $f,g,h,k$ be a $z$-homotopy equivalence relating $X,Y$ in $\CS$.  Let $(X,\a,z)$ be a curved twisted complex with curvature $z$, such that $\Id_X+\a\circ h$ is invertible in $\End^0_\CS(X)$, and define morphisms
\[
\begin{tikzpicture}[baseline=0em]
\tikzstyle{every node}=[font=\small]
\node (a) at (0,0) {$(X,\a,z)$};
\node (b) at (3,0) {$(Y,\b,z)$};
\path[->,>=stealth,shorten >=1pt,auto,node distance=1.8cm, thick]
(a) edge [loop left, looseness = 3,in=200,out=160] node[left] {$H$}	(a)
([yshift=3pt] a.east) edge node[above] {$F$}		([yshift=3pt] b.west)
([yshift=-2pt] b.west) edge node[below] {$G$}		([yshift=-2pt] a.east);
\end{tikzpicture}
\]
by the formulas
\begin{eqnarray*}
F&:=& f\circ (\Id_X+\a\circ h)\inv\\
G &:=& (\Id_X+h\circ \a)\inv \circ g\\
H&:=&h\circ (\Id_X+\a\circ h)\inv\\
\b&:=& zk+ f\circ \a\circ (\Id_X+h\circ \a)\inv\circ g
\end{eqnarray*}
Then $(Y,\b,z)$ is a well defined curved twisted complex, and $F,G$ are closed degree zero morphisms of curved twisted complexes with $d(H) = \Id_{(X,\a,z)} - G\circ F$.
\end{lemma}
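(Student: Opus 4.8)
The plan is to verify directly the three assertions bundled into the statement: (i) that $(Y,\b,z)$ is a curved twisted complex, i.e.\ $d_\CS(\b)+\b^2=z$; (ii) that $F$ and $G$ are closed in $\cTw(\CS)$, i.e.\ $d_{\cTw(\CS)}(F)=0$ and $d_{\cTw(\CS)}(G)=0$; and (iii) that $d_{\cTw(\CS)}(H)=\Id_{(X,\a,z)}-G\circ F$. (That $F$ and $G$ intertwine the curvatures is automatic, since $z$ is central.) Before starting I would assemble a short toolkit. Put $u:=(\Id_X+\a\circ h)\inv$ and $v:=(\Id_X+h\circ\a)\inv$; the element $v$ exists and equals $\Id_X-h\circ u\circ\a$ by the standard identity recalled just after Lemma~\ref{lemma:tw of zero}. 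Writing $H:=h\circ u$, the elementary manipulations $(\Id_X+h\circ\a)\circ h=h\circ(\Id_X+\a\circ h)$ and $(\Id_X+\a\circ h)\circ\a=\a\circ(\Id_X+h\circ\a)$ give the four identities
\[
H=h\circ u=v\circ h,\qquad u\circ\a=\a\circ v,\qquad u=\Id_X-\a\circ H,\qquad v=\Id_X-H\circ\a.
\]
On the level of differentials I would record the (curved) Maurer--Cartan relation $d_\CS(\a)=z-\a^2$, the four defining equations \eqref{eq:zhe df}--\eqref{eq:zhe dk} for $d_\CS(f),d_\CS(g),d_\CS(h),d_\CS(k)$, and---using \eqref{eq:d of inverse} and the Leibniz rule---the formulas $d_\CS(u)=-u\circ d_\CS(\a h)\circ u$ and $d_\CS(v)=-v\circ d_\CS(h\a)\circ v$ with $d_\CS(\a h)=(z-\a^2)\circ h-\a\circ d_\CS(h)$ and $d_\CS(h\a)=d_\CS(h)\circ\a-h\circ(z-\a^2)$. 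Centrality of $z$ is used freely.

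With this toolkit in hand, assertion (iii) comes out cleanly. From \eqref{eq:zhe dh} one has $g\circ f=\Id_X-d_\CS(h)-zh^2$, hence $G\circ F=v(g f)u=vu-v\,d_\CS(h)\,u-zH^2$ (using $v h^2 u=(vh)(hu)=H^2$ and centrality of $z$), whereas expanding $d_\CS(H)=d_\CS(h)\,u-h\,d_\CS(u)$ and substituting the formula for $d_\CS(u)$ gives $d_\CS(H)+\a H+H\a=d_\CS(h)\,u+zH^2-H\a^2H-H\a\,d_\CS(h)\,u+\a H+H\a$. Comparing the two expressions and cancelling $zH^2$, the $d_\CS(h)$-terms drop out because $\Id_X-v-H\a=0$, and the remaining terms cancel because $vu=(\Id_X-H\a)(\Id_X-\a H)=\Id_X-\a H-H\a+H\a^2H$. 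For the closedness of $F$ one expands $d_{\cTw(\CS)}(F)=d_\CS(F)+\b\circ F-F\circ\a$ with $F=f\circ u$, substitutes \eqref{eq:zhe df} and the formula for $d_\CS(u)$, rewrites $\b\circ F$ using $g f u=g F$ and $f\circ\a\circ v=F\circ\a$ (a consequence of $u\a=\a v$), and checks that everything telescopes via $u=\Id_X-\a H$; the verification of $d_{\cTw(\CS)}(G)=0$ is the mirror computation with the roles of $u,v$ and of \eqref{eq:zhe df}, \eqref{eq:zhe dg} interchanged.

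Finally, assertion (i), $d_\CS(\b)+\b^2=z$, is a longer expansion of the same type starting from $\b=zk+f\circ\a\circ v\circ g$ and using \eqref{eq:zhe dg}, \eqref{eq:zhe dk}, the formula for $d_\CS(v)$, and the collapsing identities above; here it is convenient to have already recorded the closedness of $F$ in the form $\b\circ F=F\circ\a-d_\CS(F)$ and to use it to shorten the cross terms. I expect the one genuine difficulty to be the sheer length and sign-bookkeeping of (ii) and (i): each produces many terms carrying Koszul signs from the Leibniz rule and a fixed composition order, and they collapse only because of the small closed list of identities ($u=\Id_X-\a H$, $v=\Id_X-H\a$, $H=hu=vh$, $u\a=\a v$) together with centrality of $z$. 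Organizing the calculation so that these are applied systematically---grouping terms by which of $f,g,h,k$ they contain and by the number of factors of $\a$---is what turns it into a routine verification.
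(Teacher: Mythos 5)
Your proposal is correct and takes essentially the same approach as the paper: the paper establishes this lemma by exactly this direct verification of the four identities (formally it defers to the proof of Theorem \ref{thm:curved hpl} specialized at $\e=0$, which is the same computation with the resolvents $(\Id_X+\a\circ h)\inv$, $(\Id_X+h\circ \a)\inv$ and the Leibniz and inverse-differential rules). Your toolkit $u=\Id_X-\a\circ H$, $v=\Id_X-H\circ\a$, $H=h\circ u=v\circ h$, $u\circ\a=\a\circ v$ is a clean repackaging of the same cancellations, and the one identity you carry out in full (the formula for $d(H)$) checks out.
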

Note that $\Id_X+\a\circ h$ is invertible if and only if $\Id_X+h\circ \a$ is invertible, since
\[
(\Id_X+h\circ \a) = \Id_X - h\circ (\Id_X + \a\circ h)\inv \circ \a.
\]
\begin{proof}
We omit the proof as it is a straightforward computation and actually follows from the proof of Theorem \ref{thm:curved hpl} below by setting $\e=0$.
\end{proof}

This appears very close to a proof that $(X,\a,z)$ and $(Y,\b,z)$ are homotopy equivalent and, indeed, in some cases (e.g.~if $z=0$; see Corollary \ref{cor:hpl without she}) this is already enough information to draw such a conclusion. However, to prove this formally requires an expression for the remaining piece of data: $K$.  Defining $K$ is a tricky matter.  Luckily, Markl has already told us how this is done in the case of zero curvature (spoiler alert: it requires the data of a strong homotopy equivalence).




\begin{theorem}[Curved homological perturbation lemma]\label{thm:curved hpl}
Let $z$ be a closed degree 2 element of the center of $\CS$, and let $X,Y\in \CS$ be given.  Let $\tilde{f}(\e)$, $\tilde{g}(\e),\tilde{h}(\e), \tilde{k}(\e)$ be the data of a strong homotopy equivalence $X\simeq Y$ in $\CS$.  Assume that $\CS$ is complete with respect to a two-sided ideal $\ng$ containing $z$.  Replacing $\e\mapsto z+\e$ yields morphisms
\[
f(\e):=\tilde{f}(z+\e),\qquad\quad g(\e):=\tilde{g}(z+\e),\qquad\quad h(\e):=\tilde{h}(z+\e),\qquad\quad k(\e):=\tilde{k}(z+\e).
\]
Assume that $(X,\a,z)$ is a curved twisted complex with $\a\in \End_\CS(X)\cap \ng$.  Define morphisms
\[
\begin{tikzpicture}
\tikzstyle{every node}=[font=\small]
\node (a) at (0,0) {$X$};
\node (b) at (3,0) {$Y$};
\path[->,>=stealth,shorten >=1pt,auto,node distance=1.8cm, thick]
(a) edge [loop left, looseness = 5,in=215,out=145] node[left] {$H(\e)$}	(a)
([yshift=3pt] a.east) edge node[above] {$F(\e)$}		([yshift=3pt] b.west)
([yshift=-2pt] b.west) edge node[below] {$G(\e)$}		([yshift=-2pt] a.east)
(b) edge [loop left, looseness = 5,in=35,out=325] node[right] {$K(\e)$}	(b);
\end{tikzpicture}
\]
in $\CS\llbracket\e \rrbracket$ by the formulas
\begin{subequations}
\begin{eqnarray}
F(\e) &:=& f(\e)\circ (1+\a \circ h(\e))\inv\nonumber\\
G(\e) &:=& (1+h(\e)\circ\a)\inv \circ g(\e)\nonumber\\
H(\e) & := & h(\e)\circ(1+\a \circ h(\e))\inv\nonumber\\
\beta +\e K(\e) &:=& (z+\e) k(\e) + f(\e)\circ \a\circ (\Id_X + h(\e)\circ \a)\inv \circ h(\e)\label{eq:beta def}
\end{eqnarray}
Then $F(\e)$, $G(\e)$, $H(\e)$, $K(\e)$ define a strong homotopy equivalence relating the curved twisted complexes $(X,\a,z_X)$ and $(Y,\b,z_Y)$.
\end{subequations}
\end{theorem}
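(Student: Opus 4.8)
The plan is to verify directly that $F(\e),G(\e),H(\e),K(\e)$ satisfy the defining relations of an $\e$-homotopy equivalence (Definition \ref{def:zhe}, with central parameter $\e$) between the objects $(X,\a,z)$ and $(Y,\beta,z)$ of the dg category $\cTw(\CS)\llbracket\e\rrbracket$; by the Remark following Definition \ref{def:zhe} this is precisely a strong homotopy equivalence in $\cTw(\CS)$. Writing $D$ for the twisted differential in $\cTw(\CS)\llbracket\e\rrbracket$ --- so that $D(F)=d(F)+\beta\circ F-F\circ\a$ on a degree zero morphism $F\colon(X,\a,z)\to(Y,\beta,z)$, and analogously for $G,H,K$ --- the four equations to be checked are $D(F)=\e(F H-K F)$, $D(G)=\e(G K-H G)$, $D(H)=\Id-G F-\e H^2$, and $D(K)=\Id-F G-\e K^2$. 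Along the way one must also confirm that $(Y,\beta,z)$ is a genuine curved twisted complex, i.e.\ that $\beta$ (the $\e^0$-part of \eqref{eq:beta def}) satisfies $d(\beta)+\beta^2=z$.

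Before computing I would dispatch the formal preliminaries. Since $\a\in\ng$ and $\CS$ is $\ng$-complete, the operators $\Id_X+\a\circ h(\e)$ and $\Id_X+h(\e)\circ\a$ are invertible via geometric series, so $F,G,H$ are well defined; for the same reason the substitution $\e\mapsto z+\e$ (legitimate because $z\in\ng$) converts the given strong homotopy equivalence into morphisms $f,g,h,k$ satisfying the relations of Definition \ref{def:zhe} with central parameter $w:=z+\e$ in place of $\e$ --- here one uses that $z$ is closed and central, so it passes through $d$. I would then record the resolvent identities $(\Id+\a h)\inv\circ\a=\a\circ(\Id+h\a)\inv$ and $h\circ(\Id+\a h)\inv=(\Id+h\a)\inv\circ h$, which move $\a$ and $h$ past the resolvents, together with $d\left((\Id+\a h)\inv\right)=-(\Id+\a h)\inv\circ d(\Id+\a h)\circ(\Id+\a h)\inv$ (Remark \ref{rmk:consequences}).

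With these in hand the proof becomes a substitution-and-expand exercise: verify the four relations in turn, feeding in the $w$-relations for $f,g,h,k$, the Maurer--Cartan equation $d(\a)+\a^2=z$ for the input twist, and the resolvent identities. The relation for $H$ and the closedness-type relations for $F$ and $G$ are the content of Lemma \ref{lemma:zhe and maps} (indeed that lemma is precisely the present computation evaluated at $\e=0$, at which point all four relations collapse to the assertions that $F$ and $G$ are closed and that $d(H)=\Id-G F$ in $\cTw(\CS)$). The mechanism that makes everything close up is that the pre-substitution $\e\mapsto z+\e$ lets the curvature $z$ produced by $d(\a)+\a^2=z$ recombine with the homotopy parameter into $w$, after which the resolvent $(\Id+\a h)\inv$ cancels exactly the unwanted terms, leaving relations whose surviving parameter is again $\e$ and whose constant term is absorbed into the curvature of $(Y,\beta,z)$.

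The genuinely new step, and the one I expect to be the main obstacle, is the relation for $K$ together with the Maurer--Cartan equation for $\beta$; this is where the full strength of the strong homotopy equivalence (all the higher terms $\tilde k_i$, not merely $\tilde k_0$) is consumed. Here the single series \eqref{eq:beta def} must be shown to split as $\beta+\e K(\e)$ with $\beta$ satisfying $d(\beta)+\beta^2=z$ and $K$ satisfying $D(K)=\Id-F G-\e K^2$ --- exactly the obstruction that a bare homotopy equivalence cannot resolve, and which forced the passage to strong homotopy equivalences in the first place. To organize this last step efficiently I would route it through the cone: by a curved, $\e$-linear analogue of Lemma \ref{lemma:she iff cone sc}, the sought strong homotopy equivalence is equivalent to a strong contraction of $\Cone\left(F_0\colon(X,\a,z)\to(Y,\beta,z)\right)$, and such a contraction is manufactured from the $w$-substituted strong contraction of $\Cone(\tilde f_0)$ (apply Lemma \ref{lemma:she iff cone sc} to the input, then substitute $\e\mapsto z+\e$) by a twist-of-contractible computation in the spirit of Lemma \ref{lemma:tw of zero} and Lemma \ref{lemma:null implies strongnull}. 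This packages the bookkeeping for all four relations simultaneously and exhibits $K$ as the lower-right entry of the resulting contracting homotopy.
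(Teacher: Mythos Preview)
Your direct-verification plan for $F$, $G$, $H$ is exactly what the paper does, and the paper handles $K$ and $\beta$ the same way: it notes that $d(\beta)+\beta^2=z$ and $D(K)=\Id-FG-\e K^2$ are jointly equivalent to the single identity
\[
d(\beta+\e K)+(\beta+\e K)^2 \;=\; z+\e(\Id_Y-FG),
\]
and then verifies this by expanding the left side into five pieces (namely $(z+\e)d(k)$, $d(f)\,\Theta\,g$, $f\,d(\Theta)\,g$, $-f\,\Theta\,d(g)$, and $(\beta+\e K)^2$, with $\Theta:=\a(1+h\a)^{-1}$) and adding.  No cone argument is used; the $K$-relation is no harder in kind than the others, merely longer.

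Your proposed cone route for the $K$-step is different from the paper and, as written, has a genuine gap.  To form $\Cone\bigl(F_0\colon (X,\a,z)\to(Y,\beta,z)\bigr)$ as a curved object you already need $d(\beta)+\beta^2=z$ and $F_0$ closed --- both of which are among the things to be proven --- so the construction is circular unless you first establish those facts by direct computation (at which point you have essentially done the paper's argument anyway).  In addition, the ``curved, $\e$-linear analogue of Lemma~\ref{lemma:she iff cone sc}'' you invoke is not in the paper and would need to be stated and checked, and the combination of Lemmas~\ref{lemma:tw of zero} and~\ref{lemma:null implies strongnull} does not obviously transform a $(z+\e)$-contraction into an $\e$-contraction after twisting; nor is it clear that the resulting matrix entries reproduce the \emph{specific} formulas for $F,G,H,K$ asserted in the theorem.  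The paper's uniform direct computation is both simpler and avoids these issues.
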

Note that formal series such as
\[
\tilde{f}(z+\e) = \sum_{l\geq 0} (z+\e)^l \tilde{f}_l = \sum_{i\geq 0} \e^i\sum_{j\geq 0} z^j \tilde{f}_{i+j}
\]
are well-defined from our assumption of completeness.  Additionally, after setting $\e=0$, we obtain the data $f(0),g(0),h(0),k(0)$ of a $z$-homotopy equivalence $X\simeq Y$ in $\CS$, and furthermore the above expressions for $F(0),G(0),H(0)$ and $\b$ are as constructed in Lemma \ref{lemma:zhe and maps}.

Note also that $\b$ is intended to be an endomorphism of $Y$ in $\CS$ (equivalently an endomorphism in $\CS\llbracket\e \rrbracket$ with only constant term), so equation \eqref{eq:beta def} defines $\b$ to be the $\e^0$ coefficient of the right-hand side, with all higher $\e$-degree terms defining $\e K(\e)$.

\begin{proof}
For the proof we will abbrevate by writing $f=f(\e)$, $F=F(\e)$, and so on, and also writing $1=\Id$, and omitting all occurences of $\circ$.  We also abbreviate by introducing
\[
\Theta \ \ := \ \ \a (1+h \a)\inv.
\]
For the reader's convenience we recall all of the initial assumptions and definitions:
\begin{eqnarray*}
d(f) & =& (z+\e) (f h - kf)\\
d(g)&=&(z+\e)(gk-hg)\\
d(h)&=& \Id_X - gf-(z+\e)h^2\\
d(k) &=& \Id_Y- fg - (z+\e)k^2\\
d(\a)&=& z-\a^2\\
F &:=& f (1+\a h)\inv\\
G &:=& (1+h\a)\inv g\\
H &:=& h (1+\a h)\inv\\
\b+\e K &:=& (z+\e)k+ f \Theta g,
\end{eqnarray*}
\begin{subequations}
and we must show that
\begin{eqnarray}
d(F) +\b F -F\a& =& \e (FH - KF)\label{eq:df hat}\\
d(G)+\a G - G\b&=& \e (GK - HG)\label{eq:dg hat}\\
d(H)+\a H+H\a  &=& \Id_X - GF-\e H^2\label{eq:dh hat}\\
d(K)+\b K+K\b &=& \Id_Y- FG- \e K^2\label{eq:dk hat}\\
d(\b)+\b^2 & = & z\label{eq:dbeta}
\end{eqnarray}
Note that equations \eqref{eq:dk hat}, \eqref{eq:dbeta} are equivalent to 
\begin{equation}\label{eq:dk hat 2}
d(\b+\e K) + (\b+\e K)^2\  = \ z\Id_Y+\e (\Id_Y - FG).
\end{equation}
\end{subequations}
Many of the details of the below computations are easily verified but tedious, and we omit them.  Compute:
\begin{eqnarray*}
d(f) (1+\a h)\inv & =& (z+\e)(fH - kF) \\
fd((1+\a h)\inv) & = &  F\a- z fH - f\Theta g F - \e f H + \e F H
\end{eqnarray*}
Adding these gives
\[
d(F) = -((z+\e) k + f\Theta g) F + F \a + \e F H  = -(\b + \e K)F + F \a + \e F H,
\]
which proves \eqref{eq:df hat}.

Compute:
\begin{eqnarray*}
d((1+h\a)\inv) g &=&  G f\Theta g  - \a G    +z H g+\e H g- \e H G\\
(1+h\a)\inv) d(g) &=&  z G k - (z+\e) H g 
\end{eqnarray*}
Adding these yields
\[
d(G) =  -\a G + G((z+\e)k + f\Theta g) - \e H G=  -\a G + G(\b+\e K) - \e H G,
\]
which proves \eqref{eq:dg hat}.

Compute:
\begin{eqnarray*}
d(h) (1+\a h)\inv & =& 1 - \a H - (z+\e) h H\\
-h d((1+\a h)\inv) & =&  -H\a + z hH +  g F - GF+ \e h H -\e H^2
\end{eqnarray*}
Adding these gives $d(H) = 1-GF - \a H - H \a - \e H^2$, which proves \eqref{eq:dh hat}.

Finally, to prove \eqref{eq:dk hat 2}, one computes
\begin{eqnarray*}
(z+\e)d(k) & = & (z+\e)(1 - fg - (z+\e)k^2)\\
d(f) \Theta g & =& (z+\e) (fg- fG - k f \Theta g)\\
f d(\Theta) g & =& (z+\e) (f G + Fg - fg) - \e FG -  (f\Theta g)^2 \\ 
-f \Theta d(g) & =& (z+\e) (fg- Fg - f \Theta g k )\\
(\b+\e K)^2 & =& ((z+\e)k + f \Theta g)^2\nonumber\\
 & =& (z+\e)^2k^2 + (z+\e)(k f \Theta g + f\Theta g k) + (f \Theta g)^2
\end{eqnarray*}
Adding all of these gives \eqref{eq:dk hat 2} and completes the proof.
\end{proof}

\subsection{Corollaries}
\label{ss:consequences}
In this section we extract some useful corollaries of Theorem \ref{thm:curved hpl}.  
Many concrete applications of homological perturbation are instances of the following.

\begin{corollary}\label{cor:onesided replacement}
Let $(I,\leq)$ be a poset with the property that for each $i\in I$, there are finitely many $j\in I$ such that $i\leq j$.  Let $X_i\in \CS$ be objects indexed by $i\in I$, and suppose we have a curved twisted complex $(\bigoplus_i X_i,\a,z)\in \cTw(\CS)$ where $z$ is in the center of $\CS$ and $\a$ is strictly lower triangular with respect to the partial order on $I$ (i.e.~the component $\a_{ji}$ of $\a$ from $X_i$ to $X_j$ is zero for $i>j$).

Then if $X_i\simeq Y_i$ for some objects $Y_i\in\CS$ then there is a homotopy equivalence $(\bigoplus_i X_i,\a,z)\simeq (\bigoplus_i Y_i,\b,z)$ where $\b$ is strictly lower triangular.
\end{corollary}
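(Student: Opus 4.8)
The plan is to derive the corollary from a single application of Theorem~\ref{thm:curved hpl}, carried out not in $\CS$ itself but in an auxiliary dg category of triangular $I$-matrices over $\CS$ in which the relevant perturbation series are forced to converge. First I would assemble the given homotopy equivalences: by Proposition~\ref{prop:he iff she}, each $X_i\simeq Y_i$ lifts, after possibly modifying the homotopies, to a strong homotopy equivalence, and taking the block-diagonal sum over $i\in I$ of these data produces formal series $\tilde f(\e),\tilde g(\e),\tilde h(\e),\tilde k(\e)$ of morphisms between $X:=\bigoplus_i X_i$ and $Y:=\bigoplus_i Y_i$; since composition, the differential, and the defining relations \eqref{eq:she df}--\eqref{eq:she dk} are all block-diagonal, this is again a strong homotopy equivalence $X\simeq Y$.

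Next I would fix the ambient category. Let $\DS$ be the dg category whose objects are $I$-indexed families $(Z_i)_{i\in I}$ of objects of $\CS$ and whose degree-$l$ morphisms $(Z_i)\to(W_i)$ are the ``lower triangular'' matrices $(\phi_{ji})$ with $\phi_{ji}\in\Hom_\CS^l(Z_i,W_j)$ and $\phi_{ji}=0$ unless $i\le j$; the hypothesis that each up-set $\{j:i\le j\}$ is finite makes matrix multiplication well defined, so $\DS$ is a dg category, $z$ (acting diagonally) is a closed degree-$2$ central element of it, and the curved twisted complex in the statement is the object $((X_i)_i,\a,z)\in\cTw(\DS)$, with $\a$ a morphism of $\DS$ because it is lower triangular. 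Retaining the standing hypothesis of this section that $\CS$ is complete with respect to a two-sided ideal $\mg$ (closed under $d_\CS$) with $z\in\mg$ --- one may take $\mg=0$ when $z=0$ --- let $\ng\subseteq\DS$ be the two-sided ideal of matrices whose diagonal entries lie in $\mg$; then $z\in\ng$, and $\a\in\ng$ because the diagonal entries of $\a$ vanish. The crucial point is that $\DS$ is complete with respect to $\ng$: in a product of $n$ elements of $\ng$ the $(j,i)$-entry is a sum of terms in which at least $n-\ell$ of the factors are diagonal entries drawn from $\mg$, where $\ell:=\#\{k:i\le k\le j\}<\infty$ bounds the size of a chain from $i$ to $j$; hence $(\ng^n)_{ji}\subseteq\mg^{\,n-\ell+1}\cdot\Hom_\CS(Z_i,W_j)\to 0$ as $n\to\infty$, so every $\ng$-Cauchy series converges entrywise to a morphism of $\DS$.

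Now I would apply Theorem~\ref{thm:curved hpl} in $\DS$, with ideal $\ng$, central element $z$, the strong homotopy equivalence $\tilde f,\tilde g,\tilde h,\tilde k$, and the twist $\a\in\ng$; note $1+\a\circ h(\e)$ is automatically invertible, since $\a\circ h(\e)$ is strictly lower triangular and hence topologically nilpotent, so the geometric series defining $(1+\a\circ h(\e))\inv$ converges in $\DS$. This produces a strong homotopy equivalence between the curved twisted complexes $((X_i)_i,\a,z)$ and $((Y_i)_i,\b,z)$ in $\cTw(\DS)$; setting $\e=0$ turns it into an ordinary homotopy equivalence there, as recorded just after Definition~\ref{def:she}, and this descends to the asserted homotopy equivalence in $\cTw(\CS)$ upon passing from families to direct sums. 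The triangular shape of $\b$ is read off from \eqref{eq:beta def}: with $\tilde f,\tilde g,\tilde h,\tilde k$ block-diagonal and $\a$ strictly lower triangular we have $(1+h\circ\a)\inv=1+(\text{strictly lower triangular})$, so $f\circ\a\circ(1+h\circ\a)\inv\circ g$ is strictly lower triangular, and $\b$ equals this term plus the diagonal contribution $zk_0$, which vanishes --- recovering the ``strictly lower triangular'' assertion --- precisely when $z=0$.

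The only genuine work is in the second paragraph: selecting the right ideal $\ng$ of $\DS$ and verifying $\ng$-completeness is exactly where the finite-up-set hypothesis on $(I,\le)$ does its work, and once that bookkeeping is in place the result is a black-box consequence of Theorem~\ref{thm:curved hpl} together with the assembly and $\e=0$ specialization steps.
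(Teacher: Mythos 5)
Your overall strategy is the paper's: build an auxiliary dg category of $I$-triangular matrices over $\CS$ in which the relevant perturbation series converge, then feed the block-diagonal strong homotopy equivalence and the strictly lower triangular $\a$ into Theorem \ref{thm:curved hpl}. The one place where you genuinely diverge --- and where your argument has a gap as written --- is the choice of ideal. You appeal to ``the standing hypothesis of this section that $\CS$ is complete with respect to a two-sided ideal $\mg$ with $z\in\mg$''; there is no such standing hypothesis, and the corollary assumes nothing of the sort, so for $z\neq 0$ your ideal $\ng$ (matrices with diagonal entries in $\mg$) rests on an assumption you are not entitled to. The paper instead takes $\ng$ to be the two-sided ideal of \emph{strictly} lower triangular morphisms in the matrix category $\CS'$. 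For that choice completeness costs nothing: the $(j,i)$-entry of a product of $n$ elements of $\ng$ is a sum over strictly increasing chains of length $n$ inside the finite set $\{k : i\leq k\leq j\}$, so it vanishes once $n$ exceeds the size of that set, and every $\ng$-adically Cauchy series is entrywise eventually constant. No completeness hypothesis on $\CS$ itself is needed; extracting convergence purely from the finite-up-set condition on $(I,\leq)$ is exactly the point of the corollary.

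To be fair, you have put your finger on a real wrinkle: Theorem \ref{thm:curved hpl} asks that $\ng$ contain $z$, and the diagonal central element $z$ does not lie in the strictly lower triangular ideal. But the fix is not an extra completeness hypothesis on $\CS$; the corollary's own hypotheses already tame $z$ on the relevant objects. If $\a$ is strictly lower triangular then the diagonal part of $d(\a)+\a^2$ vanishes, so the Maurer--Cartan equation forces $z$ to act by zero on each $X_i$; equivalently, under the weakly-triangular reading suggested by the parenthetical in the statement, the diagonal blocks $(X_i,\a_{ii},z)$ absorb the curvature and only the strictly lower triangular remainder of $\a$ is the perturbation, which does lie in the paper's $\ng$. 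Either way the substitution $\e\mapsto z+\e$ is harmless and the theorem applies with the paper's ideal. Your remaining steps --- the block-diagonal assembly of the strong homotopy equivalences via Proposition \ref{prop:he iff she}, the topological nilpotence of $\a\circ h(\e)$ making $(\Id_X+\a\circ h(\e))\inv$ converge, and reading the triangularity of $\b$ off \eqref{eq:beta def} --- are correct and in fact supply details the paper leaves implicit; only the sourcing of the ideal needs to be repaired as above.
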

\begin{proof}
Fix the poset $I$.  Define an auxilliary dg category $\CS'$ as follows.  Objects of $\CS'$ are objects in $\CS$ equipped with a direct sum decomposition $X = \bigoplus_{i\in I} X_i$, and morphism complexes in $\CS'$ are
\[
\Hom_{\CS'}(\bigoplus_{i\in I}X_i,\bigoplus_{j\in I}Y_j) = \prod_{i\in I}\Hom_\CS(X_i,\bigoplus_{j\geq i} Y_j) = \prod_{j\geq i\in I}\Hom_\CS(X_i, Y_j),
\]
where this latter isomorphism follows since the direct sum $\bigoplus_{j\geq i}Y_j$ is finite (for fixed $i$) by assumption.  That is to say, a morphism in $\CS'$ is a morphism in $\CS$ which is weakly lower triangular with respect to the given direct sum decompositions.  Let $\ng$ denote the two-sided ideal of $\CS'$ consisting of morphisms which are \emph{strictly} lower triangular.  Then $\CS'$ is complete with respect to $\ng$, and the rest of the proof is an application of Theorem \ref{thm:curved hpl}.
\end{proof}
\begin{remark}
There is a dual version, in which $\bigoplus$ is replaced by $\prod$, and the boundedness condition on $I$ is reversed.  The statement and its proof can be obtained by passing to the opposite category.
\end{remark}

The following is Markl's Ideal Perturbation Lemma \cite{MarklIdeal}, stated in our language.  Note that it is essentially the curvature zero version Theorem \ref{thm:curved hpl}, except that we do not assume given a 2-sided tensor ideal containing $\a$, with respect to which $\CS$ is complete.

\begin{corollary}[Markl's Lemma]\label{cor:uncurved hpl}
Suppose $f(\e),g(\e),h(\e),k(\e)$ are the data of a strong homotopy equivalence $X\simeq Y$ in $\CS$.  Let $\tw_\a(X)$ be a twisted complex such that $\Id_X+ \a \circ h_0$ is invertible. 
Define morphisms $\b \in \End_\CS(Y)$ and $F\in \Hom_{\CS\llbracket\e\rrbracket}^0(X,Y)$, $G\in  \Hom_{\CS\llbracket\e\rrbracket}^0(Y,X)$, $H\in  \End_{\CS\llbracket\e\rrbracket}^{-1}(X)$, $K\in \End_{\CS\llbracket\e\rrbracket}^1(Y)$, by the formulas
\begin{subequations}
\begin{eqnarray}
F(\e) &:=& f(\e)\circ (\Id_X+\a \circ h(\e))\inv\\
G(\e) &:=& (1+h(\e)\circ\a)\inv\circ g(\e)\\
H(\e) & := & h(\e)\circ(\Id_X+\a \circ h(\e))\inv\\
\beta +\e K(\e) &:=& \e k(\e) + f(\e)\circ \a\circ (\Id_X + h(\e)\circ \a)\inv \circ f(\e)
\end{eqnarray}
Then $F(\e)$, $G(\e)$, $H(\e)$, $K(\e)$ define a strong homotopy equivalence relating the curved twisted complexes $(X,\a,z_X)$ and $(Y,\b,z_Y)$.
\end{subequations}
\end{corollary}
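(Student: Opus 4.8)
The plan is to read the statement off from the proof of Theorem~\ref{thm:curved hpl} specialized to $z=0$, after observing that the completeness hypothesis used there becomes unnecessary in that case. Recall that in Theorem~\ref{thm:curved hpl} the ideal $\ng$ (with respect to which $\CS$ is complete) plays exactly two roles: it makes the substituted series $\tilde f(z+\e)=\sum_{i\ge 0}\e^i\sum_{j\ge 0}\binom{i+j}{i}z^j\tilde f_{i+j}$ (and its analogues for $g,h,k$) into honest elements of $\CS\llbracket\e\rrbracket$, and it guarantees invertibility of $\Id_X+\a\circ h(\e)$. When $z=0$ the first role disappears entirely: the substitution $\e\mapsto z+\e$ is the identity, so $f(\e)=\tilde f(\e)$, $g(\e)=\tilde g(\e)$, $h(\e)=\tilde h(\e)$, $k(\e)=\tilde k(\e)$ are simply the given strong homotopy equivalence data, which already live in $\CS\llbracket\e\rrbracket$ with no convergence issue whatsoever.

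So the only point that still needs an argument is invertibility. Here I would use that $\CS\llbracket\e\rrbracket$ is automatically complete with respect to the two-sided ideal of morphisms divisible by $\e$, and that $\Id_X+\a\circ h(\e)$ reduces modulo $\e$ to $\Id_X+\a\circ h_0$, which is invertible in $\End^0_\CS(X)$ by hypothesis. Since a unit plus a topologically nilpotent element is a unit in a complete ring, $\Id_X+\a\circ h(\e)$ is invertible in $\End^0_{\CS\llbracket\e\rrbracket}(X)$, and therefore so is $\Id_X+h(\e)\circ\a$ (the two conditions are equivalent, cf.\ the note following Lemma~\ref{lemma:zhe and maps}). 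With this in hand, all of $F(\e)$, $G(\e)$, $H(\e)$, the endomorphism $\Theta:=\a\circ(\Id_X+h(\e)\circ\a)\inv$, the element $\b$ (the $\e^0$-part of the right-hand side of the equation defining $\b+\e K(\e)$, the higher-order part being $\e K(\e)$), and hence $K(\e)$, are well-defined morphisms in $\CS\llbracket\e\rrbracket$.

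It then remains to check that $F,G,H,K,\b$ satisfy the relations \eqref{eq:df hat}, \eqref{eq:dg hat}, \eqref{eq:dh hat}, \eqref{eq:dbeta} with $z$ set to $0$, equivalently \eqref{eq:df hat}, \eqref{eq:dg hat}, \eqref{eq:dh hat} together with \eqref{eq:dk hat 2}; these say exactly that $(F,G,H,K)$ is a strong homotopy equivalence between the twisted complexes $\tw_\a(X)$ and $\tw_\b(Y)$ (note that \eqref{eq:dbeta} with $z=0$ gives $d(\b)+\b^2=0$, so $\tw_\b(Y)$ is again a genuine twisted complex). This is verbatim the block of computations at the end of the proof of Theorem~\ref{thm:curved hpl}: those manipulations are purely formal, relying only on the strong homotopy equivalence relations \eqref{eq:she df}--\eqref{eq:she dk} for $\tilde f,\tilde g,\tilde h,\tilde k$, on the Maurer--Cartan identity $d(\a)+\a^2=0$ (valid since $\tw_\a(X)$ is a twisted complex), on the Leibniz rule, and on the existence of $(\Id_X+\a\circ h(\e))\inv$ --- all of which we now have. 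One simply re-runs that computation with every occurrence of $z$ replaced by $0$.

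I do not anticipate a real obstacle. The only thing to be careful about is that $\a$ is no longer assumed to lie in a complete ideal, so one must (i) check, as above, that the single inverse $(\Id_X+\a\circ h(\e))\inv$ that the construction needs does exist --- which it does, cheaply, by $\e$-adic completeness of $\CS\llbracket\e\rrbracket$ together with the invertibility of $\Id_X+\a\circ h_0$ --- and (ii) confirm, by a quick scan of the proof of Theorem~\ref{thm:curved hpl}, that the hypothesis $\a\in\ng$ was used there for nothing beyond guaranteeing that very invertibility. After that the proof is complete.
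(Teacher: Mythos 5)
Your proposal is correct and follows essentially the same route as the paper: the paper's proof likewise observes that $\Id_X+\a\circ h(\e)$ is invertible in $\End_{\CS\llbracket\e\rrbracket}(X)$ because it is a formal series in $\e$ with invertible constant term $\Id_X+\a\circ h_0$, and then cites Theorem~\ref{thm:curved hpl} (or its proof) with $z=0$ for the remaining relations. Your additional observations --- that the substitution $\e\mapsto z+\e$ is trivial when $z=0$ and that the ideal $\ng$ in the theorem served only to ensure convergence and invertibility --- are exactly the implicit content of the paper's one-line argument, just spelled out.
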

\begin{proof}
From the assumptions, $\Id_X + \a \circ h(\e)$ and $\Id_X +h(\e)\circ \a$ are invertible elements of $\End_{\CS\llbracket \e\rrbracket}(X)$ since they are formal series in $\e$ with invertible constant terms.  Thus, the morphisms $F(\e)$, $G(\e)$, $H(\e)$, and $K(\e)$ are well-defined.  That they satisfy the desired relations follows from Theorem \ref{thm:curved hpl} (or its proof) with $z=0$.
\end{proof}

As was shown already, any homotopy equivalence $X\simeq Y$ gives rise to a strong homotopy equivalence.  It is often useful to bypass strong homotopy equivalences altogether, via the following.

\begin{corollary}\label{cor:hpl without she}
Let $f_0\in \Hom^0_\CS(X,Y)$, $g_0\in \Hom^0_\CS(Y,X)$, $h_0\in \End^{-1}_\CS(X)$, $k_0\in \End^{-1}_\CS(Y)$ be a homotopy equivalence relating $X,Y$ in $\CS$.  Let $\tw_\a(X)$ be a twist of $X$ such that $\Id_X+\a\circ h$ is invertible in $\End^0_\CS(X)$, and define morphisms
\[
\begin{tikzpicture}[baseline=0em]
\tikzstyle{every node}=[font=\small]
\node (a) at (0,0) {$\tw_\a(X)$};
\node (b) at (3,0) {$\tw_\b(Y)$};
\path[->,>=stealth,shorten >=1pt,auto,node distance=1.8cm, thick]
(a) edge [loop left, looseness = 3,in=200,out=160] node[left] {$H$}	(a)
([yshift=3pt] a.east) edge node[above] {$F$}		([yshift=3pt] b.west)
([yshift=-2pt] b.west) edge node[below] {$G$}		([yshift=-2pt] a.east);
\end{tikzpicture}
\]
by the formulas
\begin{eqnarray*}
F_0&:=& f_0\circ (\Id_X+\a\circ h_0)\inv\\
G_0 &:=& (\Id_X+h_0\circ \a)\inv \circ g_0\\
H_0 &:=&h\circ (\Id_X+\a\circ h_0)\inv\\
\b &:=& f_0\circ \a\circ (\Id_X+h_0\circ \a)\inv\circ g_0
\end{eqnarray*}
Then $\tw_\b(Y)$ is a well-defined twist of $Y$ and the maps $F_0,G_0$ are inverse homotopy equivalences $\tw_\a(X)\simeq \tw_\b(Y)$ and, in fact, the boundary of $H_0\in \End^{-1}(\tw_\a(X))$ is $\Id_{\tw_\a(X)}-G_0\circ F_0$.
\end{corollary}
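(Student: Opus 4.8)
The quickest route is to recognize this as Markl's Ideal Perturbation Lemma (Corollary~\ref{cor:uncurved hpl}) read off at $\e = 0$, fed by the strong homotopy equivalence produced from the given data by Proposition~\ref{prop:he iff she}. Let me spell this out. A homotopy equivalence $(f_0,g_0,h_0,k_0)$ is exactly a $0$-homotopy equivalence in the sense of Definition~\ref{def:zhe}, so Lemma~\ref{lemma:zhe and maps} with $z = 0$ applies verbatim and already gives most of the statement: it says $\b$ solves the Maurer--Cartan equation (hence $\tw_\b(Y)$ is a genuine twist of $Y$), that $F_0$ and $G_0$ are closed degree zero morphisms between $\tw_\a(X)$ and $\tw_\b(Y)$, and that $d(H_0) = \Id_{\tw_\a(X)} - G_0\circ F_0$. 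In particular $G_0\circ F_0 \simeq \Id$, which is one of the two halves of the assertion that $F_0,G_0$ are inverse homotopy equivalences.

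What is missing is the complementary homotopy $\Id_{\tw_\b(Y)} - F_0\circ G_0 = d(K_0)$, and producing it is precisely what strong homotopy equivalences are for. So I would lift $(f_0,g_0,h_0,k_0)$ to a strong homotopy equivalence $(f(\e),g(\e),h(\e),k(\e))$ via Proposition~\ref{prop:he iff she}, and then apply Theorem~\ref{thm:curved hpl} with $z = 0$. As in the proof of Corollary~\ref{cor:uncurved hpl}, no completeness hypothesis on $\CS$ is needed here, since $\Id_X + \a\circ h(\e)$ is a power series in $\e$ whose constant term $\Id_X + \a\circ h_0$ is invertible by hypothesis, hence $\Id_X + \a\circ h(\e)$ is invertible in $\End_{\CS\llbracket\e\rrbracket}(X)$. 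The theorem then outputs a strong homotopy equivalence $F(\e),G(\e),H(\e),K(\e)$ relating $(X,\a,0) = \tw_\a(X)$ and $(Y,\b,0) = \tw_\b(Y)$. Comparing constant terms --- using $f(0) = f_0$, $g(0) = g_0$, $h(0) = h_0$, and the $\e^0$-part of \eqref{eq:beta def} with $z = 0$ --- identifies $F(0),G(0),H(0)$ with the $F_0,G_0,H_0$ of the statement and the constant term of the output with $\b$. Since setting $\e = 0$ in a strong homotopy equivalence leaves an ordinary homotopy equivalence, $K_0 := K(0)$ satisfies $d(K_0) = \Id_{\tw_\b(Y)} - F_0\circ G_0$; combined with the previous paragraph, $F_0$ and $G_0$ are inverse homotopy equivalences $\tw_\a(X)\simeq\tw_\b(Y)$.

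The step I expect to need care --- and essentially the only place real work is hidden --- is the proviso in Proposition~\ref{prop:he iff she} that the lift may force a modification of $h_0$, replacing it by $h_0' = h_0 - g_0\circ(f_0\circ h_0 - k_0\circ f_0)$; this alters $\b$, and the hypothesis ``$\Id_X + \a\circ h_0$ invertible'' need not pass to $h_0'$. There are two ways around this. The economical one is to prove the corollary for $h_0'$ in place of $h_0$: the quadruple $(f_0,g_0,h_0',k_0)$ is still a homotopy equivalence, so nothing else in the argument changes, and in practice one simply works with whichever homotopy makes $\Id_X + \a\circ h$ invertible. The more satisfying one is to verify that $h_0$ and $h_0'$, which differ by $g_0$ composed with a boundary, produce Maurer--Cartan elements $\b$ and $\b'$ differing by a gauge transformation, so that $\tw_\b(Y)\cong\tw_{\b'}(Y)$ in $\Tw(\CS)$; composing that isomorphism with the equivalence $\tw_\a(X)\simeq\tw_{\b'}(Y)$ above then recovers the statement verbatim. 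In either case all the genuinely computational content is already contained in the verification of Theorem~\ref{thm:curved hpl}, so no new calculation is needed.
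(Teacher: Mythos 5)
Your overall strategy --- reduce to Markl's Lemma at $z=0$ and read off the $\e=0$ coefficients after lifting the given homotopy equivalence to a strong one --- is exactly the paper's, and your first paragraph (Lemma \ref{lemma:zhe and maps} at $z=0$ gives the Maurer--Cartan property of $\b$, the closedness of $F_0,G_0$, and $d(H_0)=\Id-G_0\circ F_0$) is fine. The gap is in the step you yourself flag, and neither of your two workarounds closes it. Workaround (a) replaces $h_0$ by $h_0'$ throughout, which changes $F_0$, $G_0$, $H_0$ and $\b$ (all of which are built from $h_0$) as well as the invertibility hypothesis, so it proves a different statement. Workaround (b) cannot get started: if $\Id_X+\a\circ h_0'$ fails to be invertible then Corollary \ref{cor:uncurved hpl} does not apply to the lifted strong homotopy equivalence at all, so there is no $\b'$ and no equivalence $\tw_\a(X)\simeq\tw_{\b'}(Y)$ to compare with. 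And even granting that invertibility, composing a gauge isomorphism $\tw_\b(Y)\cong\tw_{\b'}(Y)$ with an equivalence $\tw_\a(X)\simeq\tw_{\b'}(Y)$ only produces \emph{some} homotopy equivalence, not the assertion that the specific maps $F_0,G_0$ of the statement are mutually inverse.

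The paper's fix is a small but essential dualization: instead of lifting via a contracting homotopy for $\Cone(f_0)$ as in Proposition \ref{prop:he iff she}, which forces a modification of $h_0$, one contracts $\Cone(g_0)$. The analogue of the argument in Lemma \ref{lemma:cones detect equiv} applied to $g_0\colon Y\to X$ requires modifying $k_0$ (not $h_0$) so that $h_0 g_0-g_0 k_0$ becomes a boundary $d(g_1)$; then $\smMatrix{-k_0 & f_0\\ g_1 & h_0}$ contracts $\Cone(g_0)$, and the Catalan-number formula of Lemma \ref{lemma:null implies strongnull} together with Lemma \ref{lemma:she iff cone sc} produces a strong homotopy equivalence whose constant terms are the \emph{original} $f_0$, $g_0$, $h_0$ (only $k_0$ and the higher-order terms are new). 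Since $k_0$ appears neither in the invertibility hypothesis nor in the formulas for $F_0$, $G_0$, $H_0$, $\b$, Corollary \ref{cor:uncurved hpl} then applies verbatim and specializing $\e=0$ yields the corollary exactly as stated. If you replace your appeal to Proposition \ref{prop:he iff she} by this dual construction, the rest of your argument goes through unchanged.
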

The subscripts on the morphisms $f_0,F_0,\ldots$ above indicate that we are making statements about ordinary homotopy equivalences, not strong homotopy equivalences, and the relevant identities will be obtained from those involving s.h.e.~ by taking $\e=0$.
\begin{proof}
Let $f_0,g_0,h_0,k_0$ denote some chosen homotopy equivalence data $X\simeq Y$ in $\CS$. Modifying $k_0$ if necessary (as in the proof of Lemma \ref{lemma:cones detect equiv}) we may as well assume the existence of  $g_1\in \End^{-2}_\CS(Y)$ such that $d(g_1)+h_0g_0  -g_0k_0$.  Then $\smMatrix{-k_0 & f_0 \\ g_1 & h_0}$ defines a contracting homotopy for $\Cone(g_0)$.  From these initial data $f_0,g_0,h_0,k_0$ and $g_1$ we can construct the data of a strong homotopy equivalence $X\simeq Y$ via
\[
\sqmatrix{-k(\e) & f(\e) \\ c(\e) & h(\e)}  \ : = \ \sum_{l\geq 0} (-\e)^l C_l \sqmatrix{-k_0 & f_0 \\ g_1 & h_0}^{2l}
\]
and $g(\e):=g_0+\e c(\e)$, as Lemma \ref{lemma:she iff cone sc} and Lemma \ref{lemma:null implies strongnull}.  Then, from $f(\e),g(\e),h(\e),k(\e)$ we construct $\b\in \End^1_\CS(Y)$ as well as the data $F(\e),G(\e),H(\e),K(\e)$  of a strong homotopy equivalence $\tw_\a(X)\simeq \tw_\b(Y)$ as in Corollary \ref{cor:uncurved hpl}.   Setting $\e=0$, we obtain the desired relations involving $\b$, $F_0=F(0),$  $G_0=G(0),$ and  $H_0=H(0)$. 
\end{proof}

\printbibliography
\end{document}